\documentclass{amsart}
\usepackage{amssymb}


\newcommand{\norm}[1]{\mbox{$\|#1\|$}}

\newcommand{\x}{\times}
\newcommand{\cs}{\mbox{$C^{*}$-algebra}}
\newcommand{\css}{\mbox{$C^{*}$-algebras}}

\newcommand{\tE}{\tilde{E}}
\newcommand{\tF}{\tilde{F}}
\newcommand{\tp}{\tilde{p}}


\newcommand{\C}{\mathbb{C}}

\newcommand{\R}{\mathbb{R}}

\newcommand{\ov}[1]{\mbox{$\overline{#1}$}}

\newcommand{\al}{\mbox{$\alpha$}}
\newcommand{\eps}{\mbox{$\epsilon$}}
\newcommand{\bt}{\mbox{$\beta$}}
\newcommand{\ga}{\mbox{$\gamma$}}
\newcommand{\Ga}{\mbox{$\Gamma$}}
\newcommand{\de}{\mbox{$\delta$}}
\newcommand{\De}{\mbox{$\Delta$}}

\newcommand{\la}{\mbox{$\lambda$}}

\newcommand{\si}{\mbox{$\sigma$}}



\newcommand{\mfH}{\mathfrak{H}}

\newcommand{\bc}{\begin{center}}

\newcommand{\ec}{\end{center}}
\newcommand{\be}{\begin{enumerate}}
\newcommand{\ee}{\end{enumerate}}

\newcommand{\beqn}{\begin{eqnarray}}
\newcommand{\eeqn}{\end{eqnarray}}
\newcommand{\beqns}{\begin{eqnarray*}}
\newcommand{\eeqns}{\end{eqnarray*}}
\newcommand{\bq}{\begin{quote}}
\newcommand{\eq}{\end{quote}}

\newcommand{\bi}{\begin{itemize}}
\newcommand{\ei}{\end{itemize}}
\newcommand{\bd}{\begin{description}}
\newcommand{\ed}{\end{description}}

\newcommand{\lan}{\mbox{$\langle$}}
\newcommand{\ran}{\mbox{$\rangle$}}

\theoremstyle{plain}
\newtheorem{theorem}{Theorem}[section]
\newtheorem{lemma}[theorem]{Lemma}
\newtheorem{proposition}[theorem]{Proposition}

\theoremstyle{definition}
\newtheorem{definition}[theorem]{Definition}

\theoremstyle{remark}

\numberwithin{equation}{section}
\hyphenation{Clear-ly}


\begin{document}

\title{The analytic index for proper, Lie groupoid actions}

\author{Alan L. T. Paterson}
\address{Department of Mathematics, University of Mississippi, University,
Mississippi 38677}
\email{mmap@olemiss.edu}
\subjclass{Primary: 19K35, 19K56, 22A22, 46L80, 46L87, 58B34, 58J40;
Secondary: 19L47, 55N15, 58B15, 58H05}
\date{November 25, 1999.}

\begin{abstract}
Many index theorems (both classical and in noncommutative geometry)
can be interpreted in terms of a Lie groupoid acting properly on a manifold
and leaving an elliptic family of pseudodifferential operators invariant.
Alain Connes in his book raised the question of an index theorem in this
general context. In this paper, an analytic index for many such situations is
constructed. The approach is inspired by the classical families theorem of
Atiyah and Singer, and the proof generalizes,
to the case of proper Lie groupoid actions,
some of the results proved for proper locally compact group actions by N. C. Phillips.
\end{abstract}

\maketitle

\section{Introduction}

The objective of this paper is to prove the following theorem.

\begin{theorem}  \label{maintheorem}
Let $G$ be a Lie groupoid for which the unit space $G^{0}$ is a proper,
$G$-compact $G$-space. Let $(X,p)$ be a
$G$-manifold which is a fiber bundle over $G^{0}$ with
compact smooth manifold $Z$ as fiber and with structure group $\text{Diff}(Z)$.
Let $\tE,\tF$ be smooth $G$-vector bundles over $X$ and
$D= \{D^{u}\}:C^{0,\infty}(X,\tE)\to C^{0,\infty}(X,\tF)$ be an invariant,
continuous family of elliptic pseudodifferential operators on the fibers
$X^{u}$ of $X$.   Then $D$ determines a $(\C,C_{red}^{*}(G))$-Kasparov module
whose $K_{0}(C_{red}^{*}(G))$-class is the analytic index of $D$.
\end{theorem}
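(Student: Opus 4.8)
The plan is to build the Kasparov module explicitly from the invariant elliptic family $D$ and then to identify its $KK$-class with the analytic index. Begin by fixing a Haar system $\{\la^{u}\}$ on $G$, a $G$-invariant Riemannian metric on the fibers $X^{u}$, and $G$-invariant Hermitian metrics on $\tE$ and $\tF$. Since $G^{0}$ is proper and $G$-compact there is a cutoff function $c\in C_{c}(G^{0})$ with $\int_{G^{u}}c(s(g))\,d\la^{u}(g)=1$ for all $u$ (and, with $Z$ compact, $X$ is then itself a proper, $G$-compact $G$-space). Using $c$ and the Haar system, the compactly supported sections in $C^{0,\infty}(X,\tE)$ form a right $C_{c}^{\infty}(G)$-module with a $C_{c}^{\infty}(G)$-valued inner product, obtained by pairing sections fiberwise in $L^{2}$ and transporting by the $G$-actions on $X$ and $\tE$; completing in the norm inherited from $C_{red}^{*}(G)$ yields a Hilbert $C_{red}^{*}(G)$-module $\mathcal{E}$, and in the same way a module $\mathcal{F}$ from $\tF$. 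Here compactness of the fiber $Z$ makes each $L^{2}(X^{u},\tE^{u})$ a genuine Hilbert space depending continuously on $u$, properness forces the inner products to be compactly supported on $G$, and the cutoff function is what makes the module relations close up.

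Next, form the $\Z_{2}$-graded module $\mathcal{E}\oplus\mathcal{F}$ with $\mathcal{E}$ even, let $\C$ act by scalars, and normalize $D$. One may treat $D$ together with $D^{*}$ as a densely defined odd operator $\mathfrak{D}$ on $\mathcal{E}\oplus\mathcal{F}$ with domain the smooth compactly supported sections; fiberwise ellipticity of $D^{u}$ on the closed manifold $X^{u}$, with elliptic estimates uniform over $G^{0}$, gives essential self-adjointness and regularity of $\mathfrak{D}$, and one puts $F=\mathfrak{D}(I+\mathfrak{D}^{2})^{-1/2}$, equivalently $F=\chi(\mathfrak{D})$ for a normalizing function $\chi$. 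Alternatively, in the spirit of the Atiyah--Singer families argument, one may normalize $D$ directly inside the fiberwise pseudodifferential calculus to an invariant order-zero family $F$ which exchanges the two summands of $\mathcal{E}\oplus\mathcal{F}$, is adjointable, is self-adjoint up to a module-compact error, and has $F^{2}-I$ an invariant family of operators that are smoothing along the fibers. Either way one is reduced to the same analytic point.

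The heart of the proof is to verify the Kasparov conditions over $\C$. Because the coefficient algebra is $\C$ acting by scalars, the commutator condition $[F,a]=0$ for $a\in\C$ is vacuous, and what remains is $F-F^{*}\in\mathcal{K}(\mathcal{E}\oplus\mathcal{F})$ — either zero by construction or a fiberwise smoothing error handled as below — and $F^{2}-I\in\mathcal{K}(\mathcal{E}\oplus\mathcal{F})$. For the latter, choose an invariant pseudodifferential parametrix $Q=\{Q^{u}\}$ for $D$, so that $Q^{u}D^{u}-I$ and $D^{u}Q^{u}-I$ are smoothing along the fibers with bounds uniform over $G^{0}$, and express $F^{2}-I$ in terms of these remainders. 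The essential technical step — and the main obstacle — is to show that every invariant family of fiberwise smoothing operators, after multiplication by a compactly supported function on $G^{0}$, lies in $\mathcal{K}(\mathcal{E})$: it is a norm limit of finite sums of rank-one module operators $\theta_{\xi,\eta}$. Here compactness of $Z$ supplies the Hilbert--Schmidt and trace-class estimates on the smoothing kernels, while properness and $G$-compactness of the $G$-action on $G^{0}$, through the cutoff function, convert a ``continuous field of compact operators'' into a genuinely module-compact operator. This is precisely the point at which the argument passes from proper locally compact group actions, as treated by N. C. Phillips, to proper Lie groupoid actions.

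Finally, the triple consisting of $\mathcal{E}\oplus\mathcal{F}$, the scalar representation of $\C$, and the operator $F$ is a $(\C,C_{red}^{*}(G))$-Kasparov module, and hence determines a class in $KK(\C,C_{red}^{*}(G))\cong K_{0}(C_{red}^{*}(G))$. A routine homotopy argument shows that this class does not depend on the auxiliary choices — invariant metrics, Haar system, cutoff function, or the particular normalization or parametrix used — so it depends only on $D$, and it is this class that is the analytic index of $D$.
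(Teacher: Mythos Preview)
Your proposal is correct in outline and identifies the central analytic issue --- that invariant fiberwise smoothing families must act as compact operators on the Hilbert module --- but the route differs from the paper's in several organizational respects. The paper first reduces to $\tE=\tF$ and order zero by the standard $D\oplus D^{*}$ and $(I+(D')^{*}D')^{-m/2}$ tricks, so no $\Z_{2}$-graded module is needed and no unbounded-operator machinery (regularity, essential self-adjointness on the module) is ever invoked. More significantly, the paper does not verify module-compactness of smoothing remainders directly on $\gah$; instead it interposes a Hilbert bundle $\mfH$ over $G^{0}$ with fibers $L^{2}(X^{u},\tE^{(u)},\mu^{u})$, studies the $C^{*}$-algebras $\bgga$ and $\kgga$ of invariant bounded and compact \emph{morphisms} on this bundle, and proves as its key technical theorem a $C^{*}$-embedding $\Phi\colon\bgga\hookrightarrow\bga$ carrying $\kgga$ onto $\kga$, under which the averaged ``rank-one'' morphisms $T(e_{1},e_{2})$ correspond exactly to the module rank-ones $\theta_{e_{1},e_{2}}$. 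The parametrix for $D$ then exhibits $D$ as a Fredholm \emph{bundle} morphism, the Fredholm data are transported through $\Phi$, and a polar decomposition in the Calkin algebra $\bga/\kga$ yields the final Kasparov module. Your direct approach is legitimate and closer to modern unbounded-KK treatments, but the paper's Hilbert-bundle detour buys a clean separation of the classical analysis (continuous families of operators on compact fibers, where the Atiyah--Singer families machinery applies verbatim) from the Hilbert-module bookkeeping, and makes the passage from Phillips' group case to groupoids completely explicit. One small inaccuracy: the cutoff function $c$ plays no role in the definition of the inner product or module action on $\gach$ --- those come straight from the fiberwise $L^{2}$ pairing and the $G$-action, with properness alone guaranteeing compact support in $G$ --- rather, $c$ enters only in the averaging arguments that produce invariant metrics, invariant parametrices, and the density of the $T(e_{1},e_{2})$ in $\kgga$.
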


The motivation for this result comes from noncommutative geometry. In his
book (\cite[p.151]{Connesbook}), Alain Connes comments that a number of index
theorems both in classical and in noncommutative geometry are all special cases of
the same index theorem for $G$-invariant elliptic operators $D$ on a proper
$G$-manifold $X$, $G$ being a Lie groupoid. In this situation, the index of $D$
belongs to $K_{0}(C^{*}(G))$. The present paper will establish this under
certain (commonly satisfied) conditions. The approach to the analytic index of
$D$ in this paper is based on an adaptation of the equivariant Atiyah-Singer
index theorem for families. There are, of course, other approaches to the
analytic index in the context of groupoids, notably those using
quasi-isomorphisms and the ``deformation'' approach using the tangent groupoid
(e.g. \cite{Connesbook,Lands,LN,Mont,MontP,NWX,Paterson}). However, these
approaches apply (at the present time) only to the case where $X=G$, and in
particular, do not cover the classical families index theorem. Further, the
action of $G$ on itself is rather special: for example, that action is free. In
the present paper, no freeness assumptions of the action of $G$ on $X$ are
made.

Another merit in the approach here is that it relates more readily to
pseudodifferential operators and Fredholm theory and the well-established,
detailed, classical theory. It also gives rise to a groupoid equivariant
K-theory for $TX$ and the prospect of defining a topological index in the same
spirit as that of the classical families index theorem.  This will be
discussed elsewhere.

The author believes that the proof of this paper can be adapted to give the
analytic index in the full generality of \cite[p.151]{Connesbook} and plans to
discuss this elsewhere. The restrictions in the present paper are largely a
matter of convenience, since they ennable one to use existing theories to
shorten and simplify the proof. In particular, the space $X$ is assumed, as in
the work of Atiyah and Singer (\cite{AS4}), to be a manifold with compact fiber
over the unit space $G^{0}$ of $G$. (The base space $G^{0}$ is not, however,
assumed to be compact, but rather just $G$-compact and proper as a $G$-space.)
This ennables one to appeal, for example,
to the results on pseudodifferential operators used in \cite{AS4}. Another
advantage of this framework is that it gives rise to a Hilbert $G$-bundle, and
this allows the techniques used by N. C. Phillips in his development of
equivariant K-theory to be adapted to produce the required Kasparov module.
Further, we have used the reduced $\cs$
$C^{*}_{red}(G)$ rather than $C^{*}(G)$, thus avoiding having to use the
disintegration theorem for locally compact groupoids. So the index of $D$
constructed here lies in $K_{0}(C_{red}^{*}(G))$.

The second section collects, for the convenience of the reader, some facts and
definitions about locally compact and Lie groupoids, while the third
discusses $G$-spaces and $G$-manifolds.  The fourth section discusses
families of elliptic pseudodifferential operators invariant under the action
of a Lie groupoid $G$. Such a family $D$ gives an invariant Fredholm morphism
on a certain Hilbert bundle, and the final section shows that associated with
this bundle is a Hilbert module, whose bounded and compact module maps
correspond exactly to the invariant bounded and compact  morphisms on the
Hilbert bundle.  This section is modelled on the approach of N. C. Phillips
in his book \cite{Phillips} -- Phillips effectively deals with the case where
$G$ is a transformation group groupoid. So the Fredholm morphism $D$ translates
over to a Fredholm map on a Hilbert module over $C_{red}^{*}(G)$, which, by a
standard process, gives a Kasparov module whose $KK$-class is the analytic
index of $D$.

The author is grateful to the organizers for the opportunity to speak at the
groupoid conference of the Colorado JSRC meeting of 1999, and for their warm
hospitality. He would also like to express his thanks to N. C. Phillips for
helpful conversations about equivariant K-theory.

\section{Preliminaries on groupoids}

If $X$ is a locally compact Hausdorff space, then $\mathcal{C}(X)$ is the
family of compact subsets of $X$, $C(X)$ is the algebra of bounded continuous
complex-valued functions on $X$, and $C_{c}(X)$ is the subalgebra of $C(X)$
whose elements have compact support. For a smooth manifold $X$, we define
$C^{\infty}(X)$ to be the algebra of $C^{\infty}-$ complex-valued functions on
$X$, and $C_{c}^{\infty}(X)$ to be the algebra of functions in $C^{\infty}(X)$
with compact support.

A {\em groupoid} is most simply defined as a small category with inverses.
Spelled out axiomatically, a groupoid
is a set $G$ together with a subset $G^{2} \subset G\times G$, a product map
$m:G^{2}\to G$, where we write $m(a,b)=ab$,
and an inverse map $i:G\to G$, where we write $i(a)=a^{-1}$
and  where $(a^{-1})^{-1}=a$,  such that:
\begin{enumerate}
\item if $(a,b), (b,c)\in G^{2}$, then $(ab,c), (a,bc)\in G^{2}$
and
\[     (ab)c=a(bc);        \]
\item $(b,b^{-1})\in G^{2}$ for all $b\in G$, and if $(a,b)$
belongs to $G^{2}$, then
\[     a^{-1}(ab) = b \hspace{.2in} (ab)b^{-1} = a. \]
\end{enumerate}
We define the {\em range} and {\em source} maps $r:G\to G^{0}$, $s:G\to G^{0}$
by setting $r(x)=xx^{-1}, s(x)=x^{-1}x$.
The {\em unit space} $G^{0}$ is defined to be $r(G)(=s(G))$, or equivalently,
the set of idempotents $u$ in $G$. The maps $r,s$ fiber the groupoid $G$ over
$G^{0}$ with fibers $\{G^{u}\}, \{G_{u}\}$, so that $G^{u}=r^{-1}(\{u\})$ and
$G_{u}=s^{-1}(\{u\})$. Note that $(x,y)\in G^{2}$ if and only if $s(x)=r(y)$.

For detailed discussions of groupoids (including locally compact and Lie
group\-oids as below), the reader is referred to the books
\cite{Lands,Mackenzie,Paterson,rg}. Important examples of groupoids are given
by transformation group groupoids and equivalence relations.

For the purposes of this paper, a {\em locally compact groupoid} is a groupoid
$G$ which is also a second countable, locally compact Hausdorff space for which
multiplication and inversion are continuous. More generally, there is a notion
of {\em locally compact groupoid} which does not require the Hausdorff
condition. These arise quite often in practice, and a detailed discussion of
them is given in \cite{Paterson}. The details of the paper can be made to work
in the general case since local arguments suffice. However, for convenience,
the locally compact groupoids considered in the paper are assumed to be
Hausdorff.

Let $G$ be a locally compact groupoid. Note that $G^{2},G^{0}$ are closed
subsets of $G\x G, G$ respectively. Further, since $r,s$ are continuous, every
$G^{u}, G_{u}$ is a closed subset of $G$.

For analysis on a locally compact groupoid $G$, it is essential to have
available a {\em left Haar system}.  This is the groupoid version of a {\em
left Haar measure}, though unlike left Haar measure on a locally compact
group, such a system may not exist and if it does, it will not usually be
unique.  However, in many cases, such as in the  Lie groupoid case below,
there is a natural choice of left Haar system.

A left Haar system on a locally compact groupoid $G$ is a family of measures
$\{\la^{u}\}$ $(u\in G^{0})$, where each $\la^{u}$ is a
positive regular Borel measure on the locally compact Hausdorff space
$G^{u}$, such that the following three axioms are satisfied:
\be
\item the support of each $\la^{u}$ is the whole of $G^{u}$;
\item for any $f\in C_{c}(G)$, the function $f^{0}$, where
\[   f^{0}(u)=\int_{G^{u}}f\,d\la^{u},   \]
belongs to $C_{c}(G^{0})$;
\item for any $g\in G$ and $f\in C_{c}(G)$,
\begin{equation} \label{eq:invar}
\int_{G^{s(g)}}f(gh)\,d\la^{s(g)}(h) = \int_{G^{r(g)}}f(h)\,d\la^{r(g)}(h).
\end{equation}
\ee
The existence of a left Haar system on $G$ has topological consequences for
$G$ -- it entails that both $r,s:G\to G^{0}$ are open maps
(\cite[p.36]{Paterson}).  For each $u\in G^{0}$, the measure $\la_{u}$
on $G_{u}=(G^{u})^{-1}$ is given by: $\la_{u}(A)=\la^{u}(A^{-1})$.

Jean Renault showed (\cite{rg}) that $C_{c}(G)$, the space of continuous,
complex-valued functions on $G$ with compact support, is a convolution
$^{*}$-algebra with product given by:
\begin{equation}
f_{1}*f_{2}(g) =
\int_{G^{r(g)}}f_{1}(h)f_{2}(h^{-1}g)\,d\la^{r(g)}(h). \label{eq:convo}
\end{equation}
The involution on $C_{c}(G)$ is the map $f\to f^{*}$ where
\begin{equation}
 f^{*}(g)=\ov{f(g^{-1})}.  \label{eq:inv}
\end{equation}

As in the case of a locally compact group, there is a reduced $\cs$
$C_{red}^{*}(G)$ (and a universal $\cs$ $C^{*}(G)$) for the locally compact
groupoid $G$. The reduced $\cs$ can be defined as follows.  For each $u\in
G^{0}$, we first define
a representation $\pi_{u}$ of $C_{c}(G)$
on the Hilbert space $L^{2}(G,\la_{u})$.  To this end,
regard $C_{c}(G)$ as a dense subspace of $L^{2}(G,\la_{u})$ and define
for $f\in C_{c}(G), \xi\in C_{c}(G)$,
\begin{equation}
     \pi_{u}(f)(\xi)(g)=f*\xi\in C_{c}(G).       \label{eq:red}
\end{equation}
The reduced $\cs$-norm on $C_{c}(G)$ can then (\cite[p.108]{Paterson})
be defined by:
\[        \norm{f}_{red}=\sup_{u\in G^{0}}\norm{\pi_{u}(f)}.       \]
The groupoid $\cs$ $C^{*}(G)$ is the completion of $C_{c}(G)$
under its largest $C^{*}$-norm.
In the paper, we will concentrate (for reasons of technical simplicity) on the
reduced $\cs$ of $G$.

A locally compact groupoid $G$ is called a {\em Lie groupoid}
(\cite{Mackenzie,Paterson,Pra66,Pra67}) if $G$ is a
manifold such that:
\be
\item $G^{0}$ is a submanifold of $G$;
\item the maps $r,s:G\to G^{0}$ are submersions;
\item the product and inversion maps for $G$ are smooth.
\ee
Note that $G^{2}$ is naturally a submanifold of $G\x G$ and every $G^{u},
G_{u}$ is a submanifold of $G$.  (See \cite[pp.55-56]{Paterson}.)  Note that
the maps $r,s$ are open maps. The dimension of all of the $G^{u}, G_{u}$ is the
same, and will be denoted by $l$.

There is a basis for the topology of  a Lie groupoid that proves useful.
Since $r$ is a submersion, it is locally equivalent to a projection.  So for
each $x\in G$, there exists an open neighborhood $V$ of $x$ and a
diffeomorphism $\psi:V\to r(V)\x W$ where $W$ is an open subset of $\R^{l}$,
which is fiber preserving in the sense that we can write
$\psi(x)=(r(x),\phi(x))$ for some function $\phi$.  So if $u\in r(V)$, then
the restriction of $\psi$ to $G^{u}\cap V$ is a diffeomorphism onto $W$.
We will call such an open set $V$ a {\em basic} open set and write $V\sim
r(V)\x W$.

A. Connes (\cite[p.101]{Connesbook}) discusses convolution on
$C_{c}^{\infty}(G)$ in terms of density bundles, and this is canonical.
However, since the bundles involved are trivial, we can replace the densities
by a left Haar system $\{\la^{u}\}$ which is {\em smooth} in the sense that in
terms of a basic open set $V$, the map $(u,w)\to d\la^{u}/d\la_{W}(u,w)$ is
$C^{\infty}$ and strictly positive on $r(V)\x W$. (Here, $\la_{W}$ is the
restriction of Lebesgue measure on $\R^{l}$ to $W$.) For more details about
this, see \cite[2.3]{Paterson}. All such smooth left Haar systems are
equivalent in the obvious sense, and give the same
$C_{red}^{*}(G)$ (and the same $C^{*}(G)$). For the rest of the paper,
$\{\la^{u}\}$ will be a fixed smooth left Haar system on the Lie groupoid $G$.

\section{$G$-spaces and $G$-manifolds}

For the purposes of groupoid theory, we require the notions of proper $G$-spaces
and $G$-manifolds (e.g. \cite{MuW}, \cite[p.151]{Connesbook}).

Let $X,Y$ be locally compact, second countable topological spaces and
$p:X\to Y$ be a continuous, open surjection.  We will then say that
$(X,p)$ is {\em fibered over $Y$}. We will be particularly concerned with the
case where $Y=G^{0}$, the unit space of a locally compact groupoid $G$. The
space fibers of $X$ are then the sets $X^{u}=p^{-1}(\{u\})$ ($u\in G^{0}$).

If $(X_{i},p_{i})$
($i=1,2$) are fibered over $G^{0}$,
then the fibered product $(X_{1}*X_{2},\tilde{p})$ is also fibered
over $G^{0}$, where
\[  X_{1}*X_{2}= \{(x,y)\in X_{1}\x X_{2}: p_{1}(x)=p_{2}(y)\},  \]
and $\tilde{p}(x,y)=p_{1}(x)=p_{2}(y)$.

Let $G*X$ be the fibered product of $(G,s)$ with $(X,p)$.
The pair $(X,p)$ is defined to be a {\em
$G$-space} if there is given a continuous map $m:G*X\to X$, where we write
$gx$ for $m(g,x)$, such that for $(g,x)\in G*X$ and $(h,g)\in G^{2} (=G*G)$, we
have:
\be
\item $p(gx)=r(g)$;
\item $h(gx)=(hg)x$;
\item $g^{-1}(gx)=x$.
\ee

Of course, the pair $(G,r)$ is a $G$-space under the groupoid multiplication.
We note here that the unit space $G^{0}$ of $G$, as well as $G$ itself,
is a $G$-space.
Here, the map $p$ is just the identity map, and the action is given by:
     \[                 g.s(g)=r(g).                      \]

If $(X_{i},p_{i})$
($i=1,2$) are $G$-spaces,
then the fibered product $(X_{1}*X_{2},\tilde{p})$
is also a $G$-space under the action: $(g,x,y)\to (gx,gy)$ $(g\in G^{p(x)})$.
For a $G$-space $X$, let $G*_{r} X=\{(g,x): r(g)=p(x)\}$
be the fibered product of the $G$-spaces $(G,r)$ and $(X,p)$.

Let $(X,p)$ be a $G$-space.  Then for each $g\in G$, the map
$l_{g}:X^{s(g)}\to X^{r(g)}$, where $l_{g}(x)=gx$,
is a homeomorphism, with inverse $l_{g^{-1}}$.
Note also that if $u\in G^{0}$, then $l_{u}$ is the identity map on $X^{u}$.
Also, associated with the left action of $G$ on
$X$ is a left action $g\to L_{g}$ of $G$
on the bundle of algebras $C_{c}(X^{u})$
$(u\in G^{0})$.  For this, we define
$L_{g}:C_{c}(X^{s(g)})\to C_{c}(X^{r(g)})$ by:
\[          L_{g}f(x)=f(g^{-1}x)\hspace{.1in} (x\in X^{s(g)}).        \]
Clearly, each $L_{g}$ is an isomorphism of $^{*}$-algebras, and both
$(L_{g})^{-1}=L_{g^{-1}}$
and $L_{gh}=L_{g}L_{h}$ for $(g,h)\in G^{2}$.

We now discuss properness and $G$-compactness for locally compact groupoid
actions (on $G$-spaces). Proper actions for locally compact groups have been
investigated by R. S. Palais (\cite{Palais}) and N. C. Phillips
(\cite{Phillips}). The basic results for proper locally compact group actions
extend to the case of proper locally compact groupoid actions. Specifically,
the action of a locally compact groupoid $G$ on a $G$-space $X$ gives the orbit
equivalence relation on $X$. Further, if $X$ is a proper $G$-space, then the
space $X/G$ is locally compact Hausdorff in the quotient topology, and the
quotient map is open (\cite{MuRW,MuW}).

The $G$-space $X$ is called a {\em proper} $G$-space if the (continuous) map
$\al$, given by $(g,x)\to (gx,x)$, from $G*X$ into $X\x X$, is a proper map, i.e.
if $\al^{-1}(C)$ is compact in $G*X$ whenever $C$ is compact in $X\x X$.  It
is well-known, and easy to check, that $G$ itself as a $G$-space is proper.
The $G$-space $X_{1}*X_{2}$ is proper if both $G$-spaces
$X_{1},X_{2}$ are.

The $G$-space $G^{0}$ is not usually proper (even when $G$ is a Lie group).
However, if $G^{0}$ is a proper $G$-space -- and this is an assumption of
Theorem~\ref{maintheorem} -- then {\em every} $G$-space is proper.

The $G$-space $X$ is called {\em
$G$-compact} (cf. \cite[p.23]{Phillips})
if the quotient space $X/G$ is compact. Following the proof of
\cite[Lemma, p.23-24]{Phillips}, we obtain that {\em if $X$ is a {\em proper}
$G$-manifold, then $X$ is $G$-compact if and only if there exists a compact
subset $B$ of $X$ such that $X=GB$.}

One consequence of the existence of a $G$-compact, proper $G$-space $X$ is
that {\em $G^{0}$ as a $G$-space is also $G$-compact.} For then there is a
canonical, surjective, continuous map from $X/G$ onto $G^{0}/G$, so that the
latter is compact if the former is. In this paper, we will assume that $G^{0}$
is $G$-compact. (This implies that the $G$-space $X$ of the theorem of this
paper is itself $G$-compact.)

We now discuss invariant averaging and $G$-partition of
unities for a proper $G$-space.  The discussion generalizes the approach of
Phillips (\cite[Chapter 2]{Phillips}).  (For earlier special cases of this,
see \cite{CoMos,Kasp1}.)

To deal with the varying fibers that arise in groupoid contexts and also with
the vector bundle context, we use the following technical lemma. This is a
variant of the lemma \cite[Lemma C.0.2]{Paterson} (the main idea of which goes
back to Renault and Connes). The proof of the lemma is the same {\em mutatis
mutandis} as that of \cite[Lemma C.0.2]{Paterson}.
\newcommand{\mcE}{\mathcal{E}}

\begin{lemma}        \label{lemma:cont}
Let $(X,p)$ be a proper $G$-space
and $(\mcE,\rho)$ be a vector bundle over $X$.  Let
$f':G*_{r} X\to \mcE$ be
continuous and such that $f'((g,x))\in \mcE^{x}$
for each $x\in X$.
Suppose further that for each $C\in \mathcal{C}(X)$ and with
\[  W_{C}=\{(g,x)\in G*_{r} X: x\in C\},        \]
the restriction $f'\mid_{W_{C}}\in C_{c}(W_{C},\mcE)$.  Then the
fiber preserving map $F:X\to \mcE$, where
\begin{equation}      \label{eq:f'F}
F(x)=\int f'(g,x)\,d\la^{p(x)}(g)
\end{equation}
is continuous on $G$.
\end{lemma}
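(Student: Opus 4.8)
The plan is to reduce the continuity of $F$ to the standard fact that a parameter integral over a fixed compact Euclidean region, with jointly continuous integrand, depends continuously on the parameter. It suffices to prove continuity of $F$ at an arbitrary point $x_{0}\in X$; set $u_{0}=p(x_{0})$. First I would choose a compact neighbourhood $C$ of $x_{0}$ small enough that $\mcE$ is trivial over $C$, fix a trivialization $\mcE\mid_{C}\cong C\x\C^{n}$, and write $f'(g,x)=(x,\hat f(g,x))$ for $(g,x)\in W_{C}$. Then $\hat f\colon W_{C}\to\C^{n}$ is continuous with compact support (by hypothesis), $F(x)=(x,\hat F(x))$ on $C$ with $\hat F(x)=\int\hat f(g,x)\,d\la^{p(x)}(g)$, and since the trivialization is a homeomorphism it is enough to show that $\hat F$ is continuous at $x_{0}$.

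Next I would localize the support onto basic open sets. Put $K_{0}=\{g\in G:(g,x_{0})\in\operatorname{supp}(\hat f)\}$, the image of the compact set $\operatorname{supp}(\hat f)\cap(G\x\{x_{0}\})$ under $(g,x)\mapsto g$; it is compact and contained in $G^{u_{0}}=r^{-1}(u_{0})$. Each $g\in K_{0}$ lies in a basic open set $V\sim r(V)\x W$ with $u_{0}\in r(V)$, so $K_{0}$ is covered by finitely many such, $V_{1},\dots,V_{m}$, with fiber-preserving diffeomorphisms $\psi_{j}\colon V_{j}\to r(V_{j})\x W_{j}$, $\psi_{j}(y)=(r(y),\phi_{j}(y))$. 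A straightforward compactness argument, using that $\operatorname{supp}(\hat f)$ is compact and $p$ is continuous, produces an open neighbourhood $N$ of $x_{0}$ with $\overline N\subseteq C$ compact and $p(\overline N)\subseteq\bigcap_{j}r(V_{j})$, such that the set $S_{N}=\{g\in G:(g,x)\in\operatorname{supp}(\hat f)\text{ for some }x\in N\}$ has compact closure contained in $\bigcup_{j}V_{j}$. A partition of unity on $\overline{S_{N}}$ subordinate to the $V_{j}$ then gives continuous $\rho_{j}\colon G\to[0,1]$ with compact support contained in $V_{j}$ and $\sum_{j}\rho_{j}\equiv1$ on a neighbourhood of $\overline{S_{N}}$. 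Hence $\hat f(g,x)=\sum_{j}\rho_{j}(g)\hat f(g,x)$ for all $g\in G$ and all $x\in N$, so $\hat F=\sum_{j}\hat F_{j}$ on $N$, where $\hat F_{j}(x)=\int_{G^{p(x)}}\rho_{j}(g)\hat f(g,x)\,d\la^{p(x)}(g)$; it remains to prove each $\hat F_{j}$ continuous at $x_{0}$.

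Fix $j$ and put $y(x,w)=\psi_{j}^{-1}(p(x),w)$, which is defined for $x\in\overline N$ and $w\in W_{j}$ (as $p(x)\in r(V_{j})$) and satisfies $r(y(x,w))=p(x)$, so $(y(x,w),x)\in W_{C}$. The restriction of $\psi_{j}$ identifies $G^{p(x)}\cap V_{j}$ with $\{p(x)\}\x W_{j}$, and since the Haar system $\{\la^{u}\}$ is smooth, the image of $\la^{p(x)}\mid_{G^{p(x)}\cap V_{j}}$ under $\psi_{j}$ is $J_{j}(p(x),\cdot)\,d\la_{W_{j}}$ for a continuous, strictly positive function $J_{j}$ on $r(V_{j})\x W_{j}$. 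Since $\operatorname{supp}(\rho_{j})$ is compact in $V_{j}$, its $\psi_{j}$-image projects to a fixed compact set $W_{j}^{\ast}\subseteq W_{j}$, independent of $x$, and for $x\in\overline N$
\[
\hat F_{j}(x)=\int_{W_{j}^{\ast}}\rho_{j}(y(x,w))\,\hat f(y(x,w),x)\,J_{j}(p(x),w)\,d\la_{W_{j}}(w).
\]
The integrand is jointly continuous on the compact set $\overline N\x W_{j}^{\ast}$, hence uniformly continuous there, so $\hat F_{j}$ is continuous at $x_{0}$. Summing over $j$ gives continuity of $\hat F$, and hence of $F$, at $x_{0}$, and since $x_{0}$ was arbitrary $F$ is continuous on $X$.

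The main obstacle is that the measures $\la^{p(x)}$ sit on the moving fibers $G^{p(x)}$, so $F(x)$ and $F(x_{0})$ are integrals over different spaces against different measures and cannot be compared directly. What makes the argument go through is the interplay of two things: localizing the compact support of $\hat f$ onto finitely many basic open sets, and the smoothness of the Haar system; together these rewrite every piece $\hat F_{j}(x)$ as an integral over one fixed Euclidean region $W_{j}^{\ast}$ with a jointly continuous integrand, after which continuity is routine. Properness of the $G$-space is not used directly in this argument; it is what guarantees, in the settings where the lemma is applied, that the compact-support hypothesis on $f'\mid_{W_{C}}$ actually holds.
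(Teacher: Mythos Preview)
Your proof is correct and follows precisely the strategy the paper invokes by reference to \cite[Lemma~C.0.2]{Paterson}: localize the compactly supported integrand onto finitely many basic open sets $V\sim r(V)\times W$, use the smoothness of the left Haar system to rewrite each piece as an integral over a fixed compact Euclidean region with jointly continuous integrand, and conclude by uniform continuity. Your closing observation that properness of the $G$-space is not used in the lemma itself---only in its applications, to verify the compact-support hypothesis on $f'\mid_{W_{C}}$---is also correct.
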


The scalar version of the above lemma is the case
where $\mcE$ is the trivial bundle $X\x \C$: in that case, the
function $f'$ is scalar valued.  If, in addition,
$X=G$, then the result is equivalent to \cite[Lemma C.0.2]{Paterson}.

The scalar version of Lemma~\ref{lemma:cont} ennables us, in the presence of
properness, to average a $C_{c}(X)$-function into an invariant continuous
function on $X$. The same argument applies more generally to sections of Banach
$G$-bundles, cf. \cite[Lemma 2.4]{Phillips}.

A function $f:X\to \C$ is called {\em invariant} if
for all $x\in X$ and all $h\in G^{p(x)}$, we have
\[       f(h^{-1}x)=f(x).           \]

\begin{proposition}    \label{prop:etainvar}
Let $X$ be a proper $G$-space and $\xi\in C_{c}(X)$.  Then
the function $\eta:G\to \C$, where
\begin{equation}     \label{eq:etainvar}
\eta(x)=\int_{G^{p(x)}}\xi(g^{-1}x)\,d\la^{p(x)}(g).
\end{equation}
is invariant and continuous.
\end{proposition}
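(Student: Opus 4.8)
The plan is to deduce Proposition~\ref{prop:etainvar} directly from the scalar version of Lemma~\ref{lemma:cont}, together with the invariance axiom \eqref{eq:invar} of the left Haar system. First I would set up the map to which the lemma applies: define $f':G*_{r}X\to\C$ by $f'(g,x)=\xi(g^{-1}x)$. This is well-defined and continuous because the action $m:G*X\to X$ is continuous, inversion on $G$ is continuous, and $\xi$ is continuous; note that $(g,x)\in G*_{r}X$ means $r(g)=p(x)$, so $g^{-1}x$ makes sense and lies in $X^{s(g)}$. Then $F(x)=\int f'(g,x)\,d\la^{p(x)}(g)=\eta(x)$, so continuity of $\eta$ will follow once the hypotheses of the lemma are checked.

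The main point is the support condition: for $C\in\mathcal{C}(X)$, with $W_{C}=\{(g,x)\in G*_{r}X: x\in C\}$, I must show $f'\mid_{W_{C}}$ has compact support, i.e. lies in $C_{c}(W_{C})$. Here is where properness of the $G$-space $X$ enters. Let $K=\mathrm{supp}(\xi)\in\mathcal{C}(X)$. If $(g,x)\in W_{C}$ and $f'(g,x)=\xi(g^{-1}x)\neq 0$, then $g^{-1}x\in K$, so writing $y=g^{-1}x$ we have $x=gy$ with $y\in K$ and $x\in C$; thus $(g,y)=(g,g^{-1}x)$ is such that $\al(g,y)=(gy,y)=(x,g^{-1}x)\in C\times K$. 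Since $C\times K$ is compact in $X\times X$ and $\al:G*X\to X\times X$ is proper, $\al^{-1}(C\times K)$ is compact in $G*X$. The set of relevant $(g,x)$ is the continuous image of $\al^{-1}(C\times K)$ under the map $(g,y)\mapsto(g,gy)$, hence has compact closure in $G*_{r}X$; so the support of $f'\mid_{W_{C}}$ is contained in a compact subset of $W_{C}$. (One should check that this image set is indeed contained in $W_{C}$, which is immediate since $x=gy\in C$ forces membership.) With this, Lemma~\ref{lemma:cont} applies and gives continuity of $\eta$.

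Finally, invariance: fix $x\in X$ and $h\in G^{p(x)}$. I compute
\[
\eta(h^{-1}x)=\int_{G^{p(h^{-1}x)}}\xi(g^{-1}(h^{-1}x))\,d\la^{p(h^{-1}x)}(g)
=\int_{G^{s(h)}}\xi((hg)^{-1}x)\,d\la^{s(h)}(g),
\]
using $p(h^{-1}x)=s(h)$ and $(hg)^{-1}=g^{-1}h^{-1}$. Applying the left-invariance \eqref{eq:invar} of the Haar system with the function $f(k)=\xi(k^{-1}x)$ on $G^{r(h)}$ (noting $r(h)=p(x)$), the substitution $k=hg$ turns the last integral into $\int_{G^{p(x)}}\xi(k^{-1}x)\,d\la^{p(x)}(k)=\eta(x)$. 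Strictly one should observe that $k\mapsto\xi(k^{-1}x)$ restricted to $G^{r(h)}$ has compact support (again by the properness argument above, applied with $C=\{x\}$), so it is a legitimate input to \eqref{eq:invar}, which is stated for $f\in C_{c}(G)$ — here one uses a bump-function extension or the standard fact that \eqref{eq:invar} holds for continuous functions on $G^{u}$ with compact support. This gives $\eta(h^{-1}x)=\eta(x)$, as required.

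I expect the only real obstacle to be the compact-support verification in the second paragraph: one must organize the properness argument so that the ``$\xi\neq 0$'' locus inside $W_{C}$ is captured as the image of a genuinely compact set under a continuous fiber-preserving map, and confirm that this image lands in $G*_{r}X$ (equivalently $W_{C}$) rather than merely in $G\times X$. Everything else — continuity of $f'$, identification $F=\eta$, and the change of variables via \eqref{eq:invar} — is routine.
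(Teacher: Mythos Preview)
Your proof is correct and follows exactly the paper's approach: apply the scalar case of Lemma~\ref{lemma:cont} with $f'(g,x)=\xi(g^{-1}x)$ for continuity, then use the left-invariance \eqref{eq:invar} of the Haar system for invariance. You have in fact supplied more detail than the paper, which simply cites Lemma~\ref{lemma:cont} without spelling out the properness argument that gives $f'\mid_{W_C}\in C_c(W_C)$; your verification of that hypothesis via $\al^{-1}(C\times K)$ and the continuous map $(g,y)\mapsto(g,gy)$ is correct.
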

\begin{proof}
Apply the scalar version of Lemma~\ref{lemma:cont} with
\[  f'(g,x)=\xi(g^{-1}x)        \]
to obtain that $\eta$ is well-defined and continuous.
For invariance, we have, using (\ref{eq:invar}):
\beqns
\eta(h^{-1}x) &=& \int\xi(g^{-1}h^{-1}x)\,d\la^{s(h)}(x)  \\
&=& \int\xi((hg)^{-1}x)\,d\la^{s(h)}(x) \\
&=& \int\xi(k^{-1}x)\,d\la^{p(x)}(k)  \\
&=& \eta(x).
\eeqns
\end{proof}

Let $X$ be a proper $G$-space and $\{U_{\al}\}$ be a collection of open
subsets of $X$. A {\em $G$-partition of unity of $X$ subordinate to
$\{U_{\al}\}$} (cf. \cite[p.25]{Phillips}) is a collection of continuous
non-negative functions
$f_{\ga}$ with compact supports such that for every $x\in X$, we
have
\begin{equation}  \label{eq:sumfga}
\sum_{\ga}\int_{G^{p(x)}}f_{\ga}(g^{-1}x)\,d\la^{p(x)}(g)=1
\end{equation}
where the sum is locally finite.

\begin{proposition}      \label{prop:aver}
Let $X$ be a proper $G$-space and $\{U_{\al}\}$ be a collection of open
subsets of $X$ such that the sets $GU_{\al}$ cover $X$.  Then there exists a
$G$-partition of unity subordinate to the collection $\{U_{\al}\}$.
\end{proposition}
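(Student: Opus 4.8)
The plan is to reduce the existence of a $G$-partition of unity to the scalar
version of Lemma~\ref{lemma:cont} and Proposition~\ref{prop:etainvar},
combined with a standard partition-of-unity argument on the Hausdorff space
$X/G$. First I would pass to the quotient: since $X$ is a proper $G$-space, the
orbit space $X/G$ is locally compact Hausdorff and the quotient map
$q:X\to X/G$ is open and continuous. The hypothesis that the saturations
$GU_\al$ cover $X$ says precisely that the open sets $q(U_\al)$ cover $X/G$
(openness of $q$ is what makes $q(U_\al)$ open). Thus $\{q(U_\al)\}$ is an open
cover of the locally compact Hausdorff space $X/G$.

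Next I would choose, for each point of $X/G$, a compact neighborhood contained
in some $q(U_\al)$, and extract a locally finite refinement; the second
countability of $X$ (hence of $X/G$) guarantees paracompactness, so one gets a
locally finite open cover $\{V_\ga\}$ of $X/G$ with each $\ov{V_\ga}$ compact
and contained in some $q(U_{\al(\ga)})$, together with a subordinate
partition of unity $\{\psi_\ga\}$ on $X/G$. Pulling back, $\phi_\ga=\psi_\ga\ci
q$ is a continuous non-negative function on $X$ with $\sum_\ga\phi_\ga=1$
locally finitely, but $\phi_\ga$ need not have compact support. To fix this I
would use $G$-compactness (or rather the local structure): for each $\ga$,
choose a point $x_\ga$ with $q(x_\ga)\in V_\ga$ lying in $U_{\al(\ga)}$, and —
using that $q^{-1}(\ov{V_\ga})$ is contained in $GU_{\al(\ga)}$ and that the
action is proper — produce a compactly supported continuous $\chi_\ga\in
C_c(X)$ with $\chi_\ga\ge 0$, $\mathrm{supp}\,\chi_\ga\subset U_{\al(\ga)}$,
and with $\chi_\ga$ strictly positive on enough of the orbit to dominate the
relevant piece. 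The cleanest route: multiply a bump function supported in
$U_{\al(\ga)}$ by $\phi_\ga$ after first cutting down the saturation — concretely,
using properness one shows that for a compact $K\subset X$ the set of $g$ with
$gK\cap U_{\al}\ne\emp$ is controlled, so that one can choose $\chi_\ga\in
C_c(U_{\al(\ga)})$ with
$\int_{G^{p(x)}}\chi_\ga(g^{-1}x)\,d\la^{p(x)}(g)>0$ for every $x\in
q^{-1}(V_\ga)$.

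Then set
\[
 h_\ga(x)=\int_{G^{p(x)}}\chi_\ga(g^{-1}x)\,d\la^{p(x)}(g),
\]
which by Proposition~\ref{prop:etainvar} is invariant and continuous, and is
supported in $q^{-1}(\ov{q(\mathrm{supp}\,\chi_\ga)})$, hence (by local
finiteness of $\{V_\ga\}$) the family $\{h_\ga\}$ is locally finite. By
construction $h=\sum_\ga h_\ga$ is invariant, continuous, strictly positive on
all of $X$ (since at each $x$ some $V_\ga$ contains $q(x)$ and the
corresponding $h_\ga(x)>0$). Finally define
\[
 f_\ga=\frac{\chi_\ga}{h\ci p}\ \in C_c(X),
\]
which is non-negative, continuous, compactly supported inside $U_{\al(\ga)}$,
and since $h$ is invariant one checks
\[
 \sum_\ga\int_{G^{p(x)}}f_\ga(g^{-1}x)\,d\la^{p(x)}(g)
 =\frac{1}{h(x)}\sum_\ga h_\ga(x)=1,
\]
the interchange of sum and integral being justified by local finiteness. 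This
gives the required $G$-partition of unity subordinate to $\{U_\al\}$.

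The main obstacle is the compact-support step: pulling the partition of unity
back from $X/G$ produces functions with saturated, hence non-compact, support,
so one genuinely needs properness to produce the compactly supported bumps
$\chi_\ga$ whose orbital averages stay bounded below on the relevant compact
pieces of $X/G$. This is exactly the point where the argument of
\cite[p.25]{Phillips} for proper group actions has to be adapted to the
varying-fiber groupoid setting, and where Lemma~\ref{lemma:cont} and
Proposition~\ref{prop:etainvar} do the work. Everything else — paracompactness
of $X/G$, openness of $q$, local finiteness bookkeeping, and the final division
by the invariant function $h\ci p$ — is routine.
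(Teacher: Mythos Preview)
Your approach is essentially the same as the paper's: pass to the locally compact Hausdorff quotient $X/G$, use paracompactness there to obtain a locally finite family of compactly supported bumps $\chi_\ga$ (the paper's $u_\ga$) subordinate to the $U_\al$, average them via Proposition~\ref{prop:etainvar} to get a strictly positive invariant sum, and divide. One small slip: in your final formula $f_\ga=\chi_\ga/(h\ci p)$ the composition $h\ci p$ is ill-typed since $h$ is already a function on $X$; you mean simply $f_\ga=\chi_\ga/h$.
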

\begin{proof}
We follow the proof of the group version of the result of Phillips
in \cite[Lemma
2.6]{Phillips}.
Let $\pi:X\to X/G$ be the quotient map. Since $X$ is a proper $G$-space, the
space $X/G$ is a locally compact Hausdorff, second countable space and $\pi$
is open. The group argument of Phillips then goes through to give continuous
functions $u_{\ga}:X\to [0,1]$ ($\ga\in S$)
with compact supports and the appropriate
local finiteness properties, and for which the function $f_{1}$ on $X$ given
by
\[  f_{1}(x)=\sum_{\ga\in S} \int u_{\ga}(g^{-1}x)\,d\la^{p(x)}(g)  \]
is (by Proposition~\ref{prop:etainvar}) strictly positive and continuous.
One then takes
$f_{\ga}=u_{\ga}/f_{1}$.
\end{proof}

\begin{proposition}       \label{prop:propc}
Let $X$ be a proper, $G$-compact $G$-space.  Then
there exist a non-negative
function $c\in C_{c}(X)$
such that for each $x\in X$, we have
\begin{equation}
\int_{G^{p(x)}}c(g^{-1}x)\,d\la^{p(x)}(g)=1.    \label{eq:cgm1}
\end{equation}
\end{proposition}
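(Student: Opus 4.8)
The plan is to reduce Proposition~\ref{prop:propc} to Proposition~\ref{prop:aver} by using $G$-compactness to produce a single relevant open set, and then to normalize. Since $X$ is a proper, $G$-compact $G$-space, there exists a compact subset $B$ of $X$ with $X=GB$ (this is the characterization of $G$-compactness noted just before Lemma~\ref{lemma:cont}). First I would choose, by local compactness of $X$, a relatively compact open set $U$ with $B\subseteq U$, so that in particular $GU\supseteq GB=X$; thus the single-element collection $\{U\}$ has the property that $\{GU\}$ covers $X$. Applying Proposition~\ref{prop:aver} to this collection yields a $G$-partition of unity subordinate to $\{U\}$: a locally finite family $\{f_\gamma\}$ of non-negative continuous functions with compact supports such that $\sum_\gamma \int_{G^{p(x)}} f_\gamma(g^{-1}x)\,d\lambda^{p(x)}(g)=1$ for all $x\in X$, with each $\operatorname{supp} f_\gamma$ contained in $U$ (or at least in a translate governed by $U$ — here I would use the subordination in the form ``$\operatorname{supp} f_\gamma\subseteq U$'' that Phillips's construction actually delivers).

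The key step is then to collapse the sum $\sum_\gamma f_\gamma$ into one function. Set $c=\sum_\gamma f_\gamma$. The family is locally finite and each summand is supported in the fixed relatively compact set $U$, so the sum is in fact finite (only finitely many $\gamma$ have support meeting $\overline U$), hence $c$ is a non-negative continuous function with $\operatorname{supp} c\subseteq \overline U$ compact, i.e.\ $c\in C_c(X)$. Since for fixed $x$ the integration $\int_{G^{p(x)}}(\,\cdot\,)(g^{-1}x)\,d\lambda^{p(x)}(g)$ is linear and the sum is finite,
\[
\int_{G^{p(x)}} c(g^{-1}x)\,d\lambda^{p(x)}(g)
=\sum_\gamma \int_{G^{p(x)}} f_\gamma(g^{-1}x)\,d\lambda^{p(x)}(g)=1,
\]
which is exactly \eqref{eq:cgm1}. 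Continuity of $c$ (and well-definedness of the integral) also follows from the scalar version of Lemma~\ref{lemma:cont}, or directly from Proposition~\ref{prop:etainvar} applied to $c\in C_c(X)$, so no new analytic input is needed.

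The main obstacle I anticipate is purely bookkeeping: making sure that ``subordinate to $\{U\}$'' genuinely forces $\operatorname{supp} f_\gamma\subseteq U$ (rather than merely $\operatorname{supp} f_\gamma\subseteq GU=X$, which would be vacuous), so that the sum $\sum_\gamma f_\gamma$ is finite and has compact support. If one's version of Proposition~\ref{prop:aver} only guarantees local finiteness without this support containment, the remedy is to go back into Phillips's construction \cite[Lemma 2.6]{Phillips} (as adapted in the proof of Proposition~\ref{prop:aver}) and observe that the functions $u_\gamma$ there are built from a locally finite cover of the compact space $X/G$ pulled back through $\pi$, so one may take the index set $S$ finite and each $\operatorname{supp} u_\gamma$ contained in a relatively compact set whose image lies in a chart of $X/G$; finiteness of $S$ then immediately gives $c=\sum_{\gamma\in S}f_\gamma\in C_c(X)$. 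Beyond this, everything is routine linearity and the already-established averaging machinery.
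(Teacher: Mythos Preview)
Your argument is correct, but it takes a more roundabout route than the paper's. The paper argues directly: choose a compact $C\subset X$ with $Q(C)=X/G$ (using that the quotient map is open and $X/G$ is compact), pick any $\xi\in C_{c}(X)$ with $0\le\xi\le1$ and $\xi\equiv 1$ on $C$, form the invariant average $\eta(x)=\int_{G^{p(x)}}\xi(g^{-1}x)\,d\la^{p(x)}(g)$ via Proposition~\ref{prop:etainvar}, observe that $\eta>0$ everywhere (since every orbit meets $C$), and set $c=\xi/\eta$. Invariance of $\eta$ immediately gives $\int c(g^{-1}x)\,d\la^{p(x)}(g)=\eta(x)/\eta(x)=1$.

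By contrast, you invoke the full $G$-partition-of-unity Proposition~\ref{prop:aver} for a singleton cover and then collapse the resulting family. This works, but note that the proof of Proposition~\ref{prop:aver} already performs exactly the ``bump function divided by its average'' trick (the function $f_{1}$ there plays the role of $\eta$), so you are essentially calling the general machinery to recover its own key step. Your caution about whether ``subordinate'' forces $\operatorname{supp}f_{\ga}\subset U$ is well placed --- the paper's definition does not say this explicitly --- and your fallback (compactness of $X/G$ lets one take the index set finite) is the right fix. The paper's direct argument avoids this bookkeeping entirely and needs only Proposition~\ref{prop:etainvar}.
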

\begin{proof}
Let $Q:X\to X/G$ be the quotient map.  Since $Q$ is open and $X/G$ is
compact, there exists a compact subset $C$ of $X$ such that $Q(C)=X/G$.  Let
$\xi$ be in $C_{c}(X)$ such that $0\leq \xi\leq 1$ and
$\xi(C)=\{1\}$.  Define $\eta$ as in (\ref{eq:etainvar}) and take $c=\xi/\eta$.
\end{proof}

The smooth version of a $G$-space is a {\em $G$-manifold} $(X,p)$. For this,
$G$ is now a Lie groupoid and $X$ is a smooth manifold. The map $p:X\to G^{0}$
is required to be a submersion. Then each $X^{u}$ is a submanifold of $X$ of
fixed dimension $j$. As in the case of $G$, there is a basis for the topology
of $X$ consisting of {\em basic} open sets $V\sim p(V)\x Y$ where $Y$ is an
open subset of $\R^{j}$.

Since the map $s\x p:G\x X\to G^{0}\x G^{0}$ is also a submersion, the space
$G*X=(s\x p)^{-1}(\De)$ is a submanifold of $G\x X$, where $\De$ is the
diagonal $\{(u,u): u\in G^{0}\}$.
Under the above assumptions, the pair $(X,p)$ is called a {\em $G$-manifold} if
it is a $G$-space such that the multiplication map $m:G*X\to X$ is smooth and inversion is a diffeomorphism.

Note that the pairs $(G,r), (G^{0},id)$ are $G$-manifolds. Also, if
$(X_{i},p_{i})$ $(i=1,2)$ are $G$-manifolds, then $(X_{1}*X_{2},\tilde{p})$ is
also a $G$-manifold.

If $(X,p)$ is a $G$-manifold, then for each $g\in G$, the map
$l_{g}:X^{s(g)}\to X^{r(g)}$ is a diffeomorphism, with inverse $l_{g^{-1}}$.
The (restricted) map
$L_{g}:C_{c}^{\infty}(X^{r(g)})\to C_{c}^{\infty}(X^{s(g)})$ is an isomorphism
of $^{*}$-algebras.

In this paper, hermitian metrics are assumed conjugate linear in the
first variable and linear in the second.
It will follow from Proposition~\ref{prop:hermG}, with $\tE=TX$,
that there is a $G$-isometric  hermitian metric
$\lan,\ran$ on the (complex) tangent bundle $TX$, the complexification of the
vector bundle of
vectors tangent along the fibers $X^{u}$'s.  (So in this paper, $TX$
is {\em not} the (complex) tangent bundle to the manifold $X$.) This gives
in the standard way a family of smooth measures $\mu^{u}$ on the $X^{u}$'s.
So in terms of local
coordinates, we have $d\mu^{u}=\mid\det\,g_{ij}^{u}\mid^{1/2}\,dx$,
where the
hermitian metric $\lan ,\ran$ restricts over $X^{u}$ to $\lan ,\ran^{u}$,
and the latter is given locally by $\sum g_{ij}^{u}\,dx_{i}\otimes dx_{j}$.
The $G$-isometric property of $\lan ,\ran$ just says that the differential
$(\ell_{g})_{*}:TX^{s(g)}\to TX^{r(g)}$ is an isometry.
This in turn translates to the $G$-invariance of the
$\mu^{u}$'s: for $f\in C_{c}(X^{r(g)})$, we have
\begin{equation}
\int _{G^{s(g)}}f(gx)\,d\mu^{s(g)}(x)=\int_{G^{r(g)}}f(y)\,d\mu^{r(g)}(y).
\label{eq:invarmu}
\end{equation}
The counterparts to (i) and (ii) in the definition of a {\em left Haar system}
(\S 2) also hold.

Note that the $C^{\infty}$-versions of Lemma~\ref{lemma:cont},
Proposition~\ref{prop:etainvar}, Proposition~\ref{prop:aver} and
Proposition~\ref{prop:propc} hold for a $G$-manifold $X$
-- the proof modifications are simple.

\section{Pseudodifferential operators on $G$-manifolds}

For the rest of the paper, we will assume that
{\em $G$ is a Lie groupoid, $G^{0}$ is a proper, $G$-compact, $G$-manifold, and
$(X,p)$ is a $G$-manifold which is, in addition, a fiber bundle over
$G^{0}$ with smooth compact fiber $Z$ and structure group $\text{Diff}(Z)$.}
Consideration of this situation is motivated by the Atiyah-Singer families
theorem, which can be interpreted as the case in which $X$ is compact and
$G=G^{0}$, a groupoid of units. (However, in the Atiyah-Singer situation, the
base space $G^{0}$ is not assumed to be a manifold, and the fiber bundle is not
assumed to be a manifold but rather a {\em manifold over $G^{0}$}. The
``continuous family'' version of the theorem of this paper requires continuous
family groupoids (\cite{Patcont}) and will be considered elsewhere.)

Note that the existence of such a $G$-manifold imposes conditions on the Lie
groupoid $G$.  For example, since the $G$-action is proper and every $X^{u}$
is compact, it follows that
for every $u,v\in G^{0}$, the set $\{(g,x)\in G*X: (gx,x)\in X^{v}\x X^{u}\}$
is compact.  So the sets $G_{u}\cap G^{v}$ are compact.  In particular, the
isotropy groups of $G$ are compact.   It is left to the reader to check that
$X$ is $G$-compact if and only if $G^{0}$ is $G$-compact and $X$ is proper if
and only if $G^{0}$ is proper. (So in theorem of this paper, $X$ is
automatically proper and $G$-compact.) Note that by replacing $\mu^{u}$ by
$\mu^{u}/\norm{\mu^{u}}$, we can suppose that each $\mu^{u}$ is a probability
measure.

We now discuss invariant
families of pseudodifferential operators on $X$.
It is natural to impose the condition on the symbols of such operators that
they belong to one of the H\"{o}rmander class $L^{m}_{\rho,\de}$ (as Kasparov
does in \cite{Kasp1}). However, as Atiyah and Singer comment
(\cite[p.508]{AS1}), {\em for our purposes, any one of these classes would be
equally good}. The reason is that for the purposes of the analytic index, the
class of operators (associated with the class of symbols taken) gets closed up
in a $\cs$, and this closure is the same for reasonable choices of starting
class. We use the class of pseudodiferential operators in \cite{AS1}, and for
the convenience of the reader, will now recall some of the details about these
operators (\cite{AS1},\cite[Ch. 4]{Wells}).

Let $B$ be an open subset of
some $\R^{N}$ and $m\in \R$. Then the space
$\tilde{S}^{m}(B\x \R^{N};B\x \C))$ of symbols is
the set of smooth functions $a:B\x \R^{N}\to \C$ such that given
multiindices $\al, \bt$ and a compact subset $K\subset B$, there exists a
constant $C$ such that
\begin{equation}  \label{symbol}
\mid\partial^{\bt}_{x}\partial^{\al}_{\xi}a(x,\xi)\mid
\leq C(1+\mid\xi\mid)^{m-\mid\al\mid}
\end{equation}
for all $x\in K,\xi\in \R^{N}$.

The symbols $a$ that we will be considering
are required to satisfy two other properties.  Firstly, it is
required that for all $x\in B$ and all $\xi\neq 0$ in $\R^{N}$,
the principle symbol
\begin{equation}   \label{eq:spa}
\si(a)(x,\xi)= \lim_{k\to \infty}\frac{a(x,k\xi)}{k^{m}}  
\end{equation}
exists.  Clearly, $\si(a):B\x (\R^{N}\sim \{0\})\to  \C$.
Secondly, we also require that for any ``cut-off'' function
$\psi\in C^{\infty}(\R^{N})$, i.e.
a $C^{\infty}$-function on $\R^{N}$ which is $0$ near $\xi=0$
and $1$ outside the unit ball, we have
\[   a(x,\xi) - \psi(\xi)\si(a)\in \tilde{S}^{m-1}(B\x \R^{N};B\x \C). \]
The set of symbols in $\tilde{S}^{m}(B\x\R^{N};B\x\C)$
which also satisfy these two
properties is denoted by $S^{m}(B\x\R^{N};B\x\C)$.

More generally, the space $S^{m}(B\x \R^{N};B\x\C^{k})$ is defined in the
obvious way by considering functions $a:B\x \R^{N}\to \C^{k}$ whose components
are in $S^{m}(B\x \R^{N};B\x\C)$. The space $S^{m}(B\x \R^{N};B\x\C^{k})$ is a
Fr\'{e}chet space using as seminorms the $\norm{.}_{m,B,\al,\bt,K}$'s, where
$\norm{.}_{m,B,\al,\bt,K}$ is the smallest constant $C$ for which the
inequality of (\ref{symbol}) holds.

Any $a\in S^{m}(B\x\R^{N};B\x\C^{k})$
determines a pseudodifferential operator
$a(x,D_{x}):C_{c}^{\infty}(B;\C^{k})\to
C^{\infty}(B;\C^{k})$, 
where for $f\in C_{c}^{\infty}(B;\C^{k})$,
\begin{equation}  \label{eq:ayD}
a(x,D_{x})f(x)=
(2\pi)^{-N}\int_{\R^{N}}\,e^{ix.\xi}a(x,\xi)\hat{f}(\xi)\,d\xi.
\end{equation}
Define the seminorm $\norm{,}_{m,B,\al,\bt,K}$ on the operators
$a(x,D_{x})$ by:
\[  \norm{a(x,D_{x})}_{m,B,\al,\bt,K}
=\norm{a}_{m,B,\al,\bt,K}.  \]

Regarding $B\x \R^{N}$ as $T^{*}B$ and replacing $B\x \C^{k}$
by a smooth complex vector
bundle, the above definitions generalize in the standard way to the setting of
smooth complex vector bundles $E,F$ over a
manifold $M$. (This is because the class of pseudodifferential operators
above are invariant under diffeomorphisms.)
We will take $M$ to be compact but this is not essential.
In this setting, an
element of $S^{m}(M;E,F)$ is just a section
of the bundle $Hom(E,F)$ pulled back over the cotangent bundle
$T^{*}M$. Locally, $Hom(E,F)$ is of the form $B\x\C^{k}$ but $\C^{k}$ is
realized as the space of $q\x p$-matrices, where $p,q$ are the ranks of $E$
and $F$.  So
with respect to a coordinate patch on $M$ and bases in $E,F$, an element of
$S^{m}(M;E,F)$ is just a matrix $a_{ij}$ of symbols of the
$S^{m}(B\x\R^{N};B\x \C^{k})$-kind earlier. A pseudodifferential operator
\[       D:C^{\infty}(M,E)\to C^{\infty}(M,F)            \]
is a map such that for every coordinate neighborhood $U$ of
$M$ trivializing $E,F$, the map that
sends $f\in C_{c}^{\infty}(U,E)$ to the restriction of $D\psi$ to $U$ is a
pseudodifferential operator of the form  (\ref{eq:ayD}),
or equivalently (\cite[p.84]{Hor}), that for any
$\phi,\psi\in C_{c}^{\infty}(U)$, the operator $\phi D\psi$ is a
pseudodifferential operator in the sense of (\ref{eq:ayD}).

The space of such pseudodifferential operators
$D$ is denoted by $\Psi^{m}(M;E,F)$ and is a Fr\'{e}chet space
under the seminorms $D\to \norm{(\phi D\psi)}_{m,U,\al,\bt,K}$.

The principle symbol $\si(D)$
of $D$, defined locally in (\ref{eq:spa}), is independent
of coordinate choices, and is a section of the bundle
$S^{m}(T_{0}^{*}M;Hom(\pi^{*}E,\pi^{*}F))$,
where $T_{0}^{*}M$ is $T^{*}M$ with the zero section removed and
$\pi$ is the canonical map from $T_{0}^{*}M$ to $M$.
An explicit formula for $\si(D)$ is:
\begin{equation} \label{eq:siD}
\si(D)(x,d\phi(x))f(x)=
\lim_{k\to \infty}k^{-m}e^{-ik\phi(x)}D(e^{ik\phi}f)(x)
\end{equation}
where $\phi\in C^{\infty}(M)$ and $f\in C^{\infty}(M,E)$ (e.g.
\cite[p.509]{AS1}, \cite[p.298]{CoMos}).

It is obvious from (\ref{eq:spa}) that
the principle symbol is homogeneous of degree $m$, i.e.
for $\xi\neq 0$ and $t>0$, we have
\[       a_{m}(x,t\xi)=t^{m}a(x,\xi).   \]
This homogeneity ennables one to regard the principle symbol as a section of
the sphere bundle $Hom(\pi_{S}^{*}E,\pi_{S}^{*}F)$ over $S(M)$. Here, $S(M)$
is the unit sphere bundle of $TM$ defined by some smooth Riemannian metric on
$M$, $\pi_{S}:S(M)\to M$ is the canonical projection, and
$\pi_{S}^{*}E$, $\pi_{S}^{*}F$ are the associated
pull-back bundles. Alternatively
expressed, $\si(D)$ is an element of $\text{Symb}^{m}(M;E,F)$,
the space of smooth
homomorphisms from $\pi_{S}^{*}E$ to $\pi_{S}^{*}F$.

For convenience, from now on, we will only consider the case where $E=F$ and
where the order of the pseudodifferential operators involved is $O$. For the
first of these, a well-known argument ennables us to replace $D$ by $D\oplus
D^{*}$ on $E\oplus F$ (using hermitian metrics on $E$ and $F$). So we can take
$E=F$.

One can reduce to the case $m=0$ by giving $E$ a hermitian metric and a
connection.  Then one just replaces $D$ by
by $(I+(D')^{*}D')^{-m/2}D$ where $D'$
is the covariant derivative of the connection (\cite[p.511]{AS1}).
We will write $\Psi(M;E)$ in place of $\Psi^{0}(M;E,E)$ and
$\text{Symb}(M;E)$ for $\text{Symb}^{0}(M;E,E)$.
An element $D\in \Psi(M;E)$
is called {\em elliptic} if the section $\si(D)$ is invertible.

Let $E$ be given a hermitian metric (so that $E$ becomes a finite-dimensional
Hilbert bundle).
Every $D\in \Psi(M;E)$ extends to a bounded linear operator on
$L^{2}(M,E,\mu)$ for any smooth positive measure $\mu$ on $M$ (e.g.
\cite[Theorem 6.5]{Shubin}). The closure of $\Psi(M;E)$ in $B(L^{2}(M,E,\mu))$
will be denoted by $\ov{\Psi}(M;E)$. For $D\in \Psi(M;E)$, let $\tilde{D}\in
\ov{\Psi}(M;E)$ be the extension of $D$ by continuity to $L^{2}(M,E,\mu)$.
The following result is due to Atiyah and Singer
(\cite[p.124]{AS4},\cite[p.512]{AS1}).

\begin{proposition}    \label{prop:contpsdo}
The map $D\to \tilde{D}$ is continuous. Further the map $\tilde{D}\to \si(D)$
is continuous, where the norm on $\text{Symb}(M;E)$ is the $\sup$-norm
over the unit sphere bundle $SM$ (treating $\pi_{S}^{*}E$ as a Hilbert bundle
over $SM$).
\end{proposition}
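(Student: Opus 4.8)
The plan is to reduce both assertions, following Atiyah and Singer (\cite[p.512]{AS1}, \cite[p.124]{AS4}), to standard estimates for pseudodifferential operators on open subsets of $\R^{N}$ by means of a partition of unity. Fix a finite cover of the compact manifold $M$ by coordinate charts $U_{i}$ trivializing $E$, a subordinate partition of unity $\{\phi_{i}\}$, and cutoffs $\chi_{i}\in C_{c}^{\infty}(U_{i})$ equal to $1$ near $\text{supp}\,\phi_{i}$, and write $D=\sum_{i,j}\phi_{i}D\phi_{j}$. The terms with $\text{supp}\,\phi_{i}\cap\text{supp}\,\phi_{j}=\emp$ are smoothing operators whose Schwartz kernels -- the oscillatory integrals arising from (\ref{eq:ayD}) -- are smooth off the diagonal, and repeated integration by parts in $\xi$ bounds the $C^{0}$-norm of such a kernel, hence its Hilbert--Schmidt and operator norms, by finitely many of the Fr\'{e}chet seminorms defining the topology of $\Psi(M;E)$. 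Each remaining ``diagonal'' term transports to an operator $a(x,D_{x})$ on $\R^{N}$ with $\C^{k}$-valued symbol $a\in S^{0}$ compactly supported in $x$, and for such an operator the Calder\'{o}n--Vaillancourt $L^{2}$-boundedness theorem gives $\norm{a(x,D_{x})}_{B(L^{2}(\R^{N};\C^{k}))}\leq C_{N}\sum_{|\al|+|\bt|\leq \ka_{N}}\norm{a}_{0,B,\al,\bt,K}$, with $C_{N},\ka_{N}$ depending only on $N$ and $k$; the usual proof is a Cotlar--Stein almost-orthogonality argument applied to a dyadic decomposition of $a$ in $\xi$ (see \cite{Hor}). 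Assembling these bounds dominates $\norm{\tilde{D}}_{B(L^{2}(M,E,\mu))}$ by finitely many of those seminorms, and since $D\mapsto\tilde{D}$ is linear this domination is exactly its continuity on $\Psi(M;E)$; it then extends uniquely by density to $\ov{\Psi}(M;E)$.

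For the second assertion I would prove the stronger inequality $\norm{\si(D)}_{\infty}\leq\norm{\tilde{D}}_{B(L^{2})}$, which yields the claimed continuity since $\tilde{D}\mapsto\si(D)$ is linear and $\text{Symb}(M;E)$ carries the sup-norm over $SM$. Fix $(x_{0},\xi_{0})\in SM$ and a unit vector $v\in E_{x_{0}}$; work in a chart about $x_{0}=0$ in which $E$ is trivialized by a unitary frame having $v$ as a frame vector, put $\phi(x)=x\cdot\xi_{0}$, and test $D$ against the wave packets $u_{k}(x)=k^{N/4}\chi(k^{1/2}x)e^{ik\phi(x)}v$, where $\chi\in C_{c}^{\infty}(\R^{N})$ is fixed with $\norm{\chi}_{L^{2}}=1$. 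One has $\norm{u_{k}}_{L^{2}(M,E,\mu)}\to 1$ (the density of $\mu$ is smooth and the packet concentrates at $0$), and, writing the local symbol of $D$ as $\psi(\xi)\si(D)+b$ with $b\in\tilde{S}^{-1}$ as in the definition of $S^{0}$, one expects $Du_{k}=\si(D)(0,\xi_{0})u_{k}+r_{k}$ with $\norm{r_{k}}_{L^{2}}\to 0$: the $\psi(\xi)\si(D)$ contribution tends to $\si(D)(0,\xi_{0})u_{k}$ by continuity and $0$-homogeneity of $\si(D)$ together with $\psi(k\xi_{0})=1$ for $k$ large, while $b(x,D_{x})$ has operator norm $O(k^{-1})$ on the frequency range $|\xi|\sim k$ that essentially carries $\hat{u}_{k}$. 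Hence $|\si(D)(x_{0},\xi_{0})v|_{E_{x_{0}}}=\lim_{k}\norm{Du_{k}}_{L^{2}}\leq\norm{\tilde{D}}_{B(L^{2})}$, and taking the supremum over $(x_{0},\xi_{0})\in SM$ and over unit $v$ completes the argument.

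I expect the real work to be twofold and technical rather than conceptual. First, the Calder\'{o}n--Vaillancourt estimate, which I would either quote from \cite{Hor} or reprove by Cotlar--Stein. Second, in the wave-packet argument, the remainder bound $\norm{r_{k}}_{L^{2}}\to 0$ -- equivalently, that $\tilde{S}^{-1}$-operators are negligible on high-frequency wave packets and that the order-zero part may be ``frozen'' at $(0,k\xi_{0})$; the cleanest route is to estimate $b(x,D_{x})u_{k}$ directly from (\ref{eq:ayD}), using $|b(x,\xi)|\leq C(1+|\xi|)^{-1}$ and the rapid concentration of $\hat{u}_{k}$ near $k\xi_{0}$, and to handle the freezing of the principal part by bounding $x$- and $\xi$-derivatives of the symbol over the support scales of the packet. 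Everything else -- the $E$-valued symbol calculus, the partition-of-unity bookkeeping, and the density extension -- is routine.
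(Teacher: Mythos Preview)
The paper gives no proof of this proposition at all: it simply states the result and attributes it to Atiyah and Singer (\cite[p.124]{AS4}, \cite[p.512]{AS1}). Your proposal therefore cannot be compared against a proof in the paper, but it is a correct and standard reconstruction of the Atiyah--Singer argument: the partition-of-unity reduction together with Calder\'on--Vaillancourt for the first assertion, and the wave-packet/coherent-state argument establishing $\norm{\si(D)}_{\infty}\leq\norm{\tilde{D}}$ for the second, are exactly how one proves these facts.

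One small point of confusion: the clause ``it then extends uniquely by density to $\ov{\Psi}(M;E)$'' at the end of your first paragraph is misplaced. The map $D\mapsto\tilde{D}$ has domain $\Psi(M;E)$ and codomain $\ov{\Psi}(M;E)$; there is nothing to extend. What \emph{does} need a density-extension argument is the symbol map $\tilde{D}\mapsto\si(D)$: a priori $\si$ is only defined on $\Psi(M;E)$, and your inequality $\norm{\si(D)}_{\infty}\leq\norm{\tilde{D}}$ is precisely what allows it to descend to a well-defined, continuous map on the operator-norm closure $\ov{\Psi}(M;E)$. You have the right ingredient, just attached to the wrong map. Otherwise the sketch is sound, and the two technical points you flag (Calder\'on--Vaillancourt and the remainder estimate for the $\tilde{S}^{-1}$ part acting on high-frequency packets) are indeed where the honest work lies.
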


Let $(X,p)$ be (as earlier) a $G$-manifold which is a smooth fiber bundle over
$G^{0}$ with fiber $Z$.
Let $(\tE,q)$ be a {\em smooth complex vector bundle over
$X$} in the sense of Atiyah and Singer (\cite[p.123]{AS4}). So $\tE$ is a vector
bundle over $X$ such that $(\tE,p\circ q)$ is a fiber bundle over $G^{0}$ with
a fixed smooth complex vector bundle $E$ over $Z$ as fiber and structure
group $\text{Diff}(Z,E)$ (the topological group of diffeomorphisms of $\tE$
that map fibers to fibers linearly). Locally on $G^{0}$, the restrictions
$X\mid_{U}\sim U\x Z$ and $\tE\mid_{U}\sim U\x E$.

A natural example of a smooth vector bundle over $X$ is $TX$.

For $u\in G^{0}, x\in X$, let $\tE^{(u)}$ be the restriction of $\tE$ to
$X^{u}$ and $\tE^{x}$ be the $x$-fiber of $\tE$.
Note that $\tE^{(u)}$ is a vector bundle over $X^{u}$, while $\tE^{x}$ is a
vector space.
Let $C_{c}^{\infty}(X,\tE)$ be the space of smooth sections of $\tE$ with
compact supports. For $f\in C_{c}(X,\tE)$, let $f^{u}\in
C_{c}^{\infty}(X^{u},\tE^{(u)})$ be the restriction of $f$ to $X^{u}$.

Equivariance for the smooth fiber bundle $\tE$
is defined in a way similar to
that for group actions (e.g. \cite{Segal1}). Precisely, we require that
$\tE$ be
a $G$-manifold with $q:\tE\to X$ a $G$-map. In addition,
each $\ell_{g}:\tE^{(s(g))}\to \tE^{(r(g))}$, is a vector bundle isomorphism.
If $\tE$ is equivariant, then $\tE$ is a proper $G$-space
if $X$ is proper.

The fibered product $X*X$ is a fiber bundle over $G^{0}$ with fiber
$Z\x Z$ and structure group $\text{Diff}(Z\x Z)$.  In addition,
the external tensor product $\tE\boxtimes \tE$ is a
smooth vector bundle over $X*X$ with the smooth vector bundle $E\boxtimes E$ as
fiber.

\begin{proposition}   \label{prop:hermG}
There exists a hermitian metric $\lan ,\ran$ on $\tE$ which is $G$-isometric.
\end{proposition}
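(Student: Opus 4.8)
The plan is to obtain $\lan,\ran$ by averaging an arbitrary hermitian metric over the $G$-action, following the scheme of Phillips (\cite[Ch.\ 2]{Phillips}) for proper group actions and its $G$-manifold version developed in \S3.

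First I would fix \emph{any} smooth hermitian metric $\lan,\ran_{0}$ on $\tE$ (this exists since $X$, being a manifold, is paracompact). Under the standing hypotheses of this section $X$ is a proper, $G$-compact $G$-manifold, so the $C^{\infty}$-version of Proposition~\ref{prop:propc} supplies a non-negative $c\in\ccinf(X)$ with $\int_{G^{p(x)}}c(g^{-1}x)\,d\la^{p(x)}(g)=1$ for every $x\in X$. For $x\in X$ and $v,w\in\tE^{x}$ I would then put
\[
\lan v,w\ran_{x}=\int_{G^{p(x)}}c(g^{-1}x)\,\lan \ell_{g^{-1}}v,\ell_{g^{-1}}w\ran_{0}\,d\la^{p(x)}(g),
\]
where we use that $g\in G^{p(x)}$ forces $g^{-1}x\in X^{s(g)}$ and that $\ell_{g^{-1}}\colon\tE^{x}\to\tE^{g^{-1}x}$ is a linear isomorphism (so each integrand is a hermitian form on $\tE^{x}$, with $\lan,\ran_{0}$ read off at the base point $g^{-1}x$).

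The next step is to show $x\mapsto\lan,\ran_{x}$ is a smooth section of the bundle $\mcE$ of hermitian forms on $\tE$. For this I would invoke the $C^{\infty}$-version of Lemma~\ref{lemma:cont}, applied to $\mcE$ and to $f'(g,x)=c(g^{-1}x)\,\ell_{g^{-1}}^{*}\lan,\ran_{0}$. Continuity of $f'$ is immediate, and the compact-support condition on each $W_{C}$ holds because the map $(g,x)\mapsto(g^{-1}x,x)$ from $G*_{r}X$ to $X\x X$ is proper: it is the map $\al$ of \S3 composed with the inversion homeomorphism $G*_{r}X\to G*X$, and $X$ is a proper $G$-space. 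Hence $\{(g,x)\in G*_{r}X:x\in C,\ c(g^{-1}x)\neq 0\}$ lies in a compact set and $f'\mid_{W_{C}}\in C_{c}(W_{C},\mcE)$. Positivity and the hermitian axioms then follow at once: $\lan v,v\ran_{x}=\int c(g^{-1}x)\lan\ell_{g^{-1}}v,\ell_{g^{-1}}v\ran_{0}\,d\la^{p(x)}(g)\ge 0$, and since $\int c(g^{-1}x)\,d\la^{p(x)}(g)=1$ there is $g_{0}\in G^{p(x)}$ with $c(g_{0}^{-1}x)>0$, so $c(g^{-1}x)>0$ on an open neighborhood of $g_{0}$ in $G^{p(x)}$ of positive $\la^{p(x)}$-measure (the support of $\la^{p(x)}$ being all of $G^{p(x)}$), on which the integrand is strictly positive whenever $v\neq 0$; and conjugate-symmetry and sesquilinearity descend from $\lan,\ran_{0}$ because each $\ell_{g^{-1}}$ is $\C$-linear on fibers.

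Finally, for $G$-isometry I would take $h\in G$, $y\in X^{s(h)}$ and $v,w\in\tE^{y}$, and compute $\lan \ell_{h}v,\ell_{h}w\ran_{hy}$ directly from the formula. On the domain of integration $r(g)=p(hy)=r(h)$, so $\ell_{g^{-1}}\ell_{h}=\ell_{g^{-1}h}$ and the integrand equals $c((g^{-1}h)y)\lan\ell_{g^{-1}h}v,\ell_{g^{-1}h}w\ran_{0}=\Phi(h^{-1}g)$, where $\Phi(k)=c(k^{-1}y)\lan\ell_{k^{-1}}v,\ell_{k^{-1}}w\ran_{0}$ for $k\in G^{s(h)}$. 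The left-invariance identity (\ref{eq:invar}), applied with the groupoid element $h^{-1}$, then rewrites $\int_{G^{r(h)}}\Phi(h^{-1}g)\,d\la^{r(h)}(g)$ as $\int_{G^{s(h)}}\Phi(g)\,d\la^{s(h)}(g)=\lan v,w\ran_{y}$, so $(\ell_{h})_{*}\colon\tE^{s(h)}\to\tE^{r(h)}$ is an isometry, which is precisely the $G$-isometry condition. I expect the only real friction to be the bookkeeping in the last two steps — verifying the $W_{C}$ hypothesis of Lemma~\ref{lemma:cont} through properness, and keeping the directions of the action and of the Haar invariance straight in the change of variables; conceptually this is the standard averaging argument.
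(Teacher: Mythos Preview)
Your argument is correct and follows essentially the same averaging scheme as the paper: pick an arbitrary hermitian metric, weight it by the cut-off $c$ of Proposition~\ref{prop:propc}, integrate over $G^{p(x)}$, and then use Lemma~\ref{lemma:cont} for regularity and the left-invariance~(\ref{eq:invar}) for $G$-isometry. The only cosmetic difference is in how Lemma~\ref{lemma:cont} is packaged: the paper applies the \emph{scalar} version with $\tE\boxtimes\tE$ in place of $X$ (viewing $\lan\xi,\eta\ran$ as a scalar function of the pair $(\xi,\eta)$), whereas you keep $X$ as the base and take $\mcE$ to be the bundle of hermitian forms on $\tE$; both invocations are valid and yield the same conclusion. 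Your explicit verifications of positive-definiteness and of the isometry identity fill in details that the paper leaves to the reader.
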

\begin{proof}
(cf. \cite[pp.40-41]{Phillips} for the group case.)
Let $\lan ,\ran'$ be a hermitian metric on $\tE$ and $c$ be as in
Proposition~\ref{prop:propc}. For $\xi,\eta\in \tE^{x}$,
define
\[  \lan\xi,\eta\ran^{x}=
\int c(g^{-1}x)\lan g^{-1}\xi,g^{-1}\eta\ran'\,d\la^{p(x)}(g).    \]
Applying the scalar version of
Lemma~\ref{lemma:cont} (to deal with continuity)
with $\tE\boxtimes \tE$ in place of $X$ and with
$f'(g,\xi\otimes \eta)=c(g^{-1}x)\lan g^{-1}\xi,g^{-1}\eta\ran'$,
we obtain a hermitian metric $x\to \lan ,\ran^{x}$ for $\tE$.  Arguing as in
the proof of Proposition~\ref{prop:etainvar} gives that $\lan ,\ran$ is
isometric.
\end{proof}

For the rest of the paper, $\tE$ (as well as $TX$) will be assumed to have
$G$-isometric hermitian metrics. The smooth version of the following definition
is given in \cite{LN,NWX}.

\begin{definition}      \label{def:psdo}
A {\em pseudodifferential family} on $X$
is a set $D=\{D^{u}\}$ $(u\in G^{0})$ where:
\be
\item each $D^{u}: C^{\infty}(X^{u};\tE^{(u)})\to
C^{\infty}(X^{u};\tE^{(u)})$ belongs to $\Psi(X^{u};\tE^{(u)})$;
\item given a basic open subset $V\sim p(V)\x Y$ of $X$, trivializing
$\tE$ (so that $\tE\mid_{V}\sim p(V)\x Y\x \C^{k}$),
there exists a continuous function
$a:p(V)\to S(T^{*}Y;Y\x M_{k})$ such that for each $u\in p(V)$, and
identifying $V\cap X^{u}$ with $Y$, we have
\begin{equation}
D_{u}\mid_{V\cap X^{u}}=a^{u}(x,D^{x})    \label{eq:auyD}
\end{equation}
where $a^{u}(x,\xi)=a(u)(x,\xi)$.
\ee
\end{definition}

Let $\Psi(X;\tE)$ be the set of pseudodifferential families above.
Definition~\ref{def:psdo} is equivalent to the corresponding definition
of a pseudodifferential family given in \cite{AS4}.  There, Atiyah and Singer
define $H$ to be the closed subgroup of $\text{Diff}(Z,E)\x \text{Diff}(Z,E)$
consisting of pairs $(\Psi,\Phi)$ where $\Psi,\Phi$ determine the same
element of $\text{Diff}(Z)$.  They show that $\Psi(X,\tE)$ is a fiber bundle
over $G^{0}$ with fiber $\Psi(Z;E)$ and structure group $H$.  Then $D$ is a
pseudodifferential family if and only if the map $u\to D^{u}$ is a continuous
section of $\Psi(X,\tE)$.

\begin{definition}
(cf. \cite{Cointeg})
A function $f\in C(X,\tE)$ is said to belong to \\
$C^{0,\infty}(X,\tE)$
if in local terms,
the map $u\to f^{u}$ from $G^{0}$ into $C^{\infty}(Z,E)$
is continuous (where $C^{\infty}(Z,E)$ has its standard topology
of uniform convergence on compacta for all derivatives).  The space of
functions $f$ in $C^{0,\infty}(X,\tE)$ that have compact support is denoted by
$C_{c}^{0,\infty}(X,\tE)$.
\end{definition}

It is obvious that $C_{c}^{0,\infty}(X,\tE)$ is a complex vector space which
contains $C_{c}^{\infty}(X,\tE)$.  The space $C_{c}^{0,\infty}(X,\tE)$ plays a
similar role for the family $D=\{D_{u}\}$ as does $C_{c}^{\infty}(M,E)$ does
for pseudodifferential operators on manifolds.

\begin{proposition}       \label{prop:propD}
Let $D$ be a pseudodifferential family on $X$.  Then
$D:C^{0,\infty}(X,\tE)\to C^{0,\infty}(X,\tE)$,
where for $x\in X^{u}$, we define
\[                   Df(x)= D^{u}(f^{u})(x).                          \]
\end{proposition}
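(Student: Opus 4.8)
The plan is to unwind the definitions down to the assertion that a pseudodifferential operator acts continuously on smooth sections, and then to deduce the latter from Proposition~\ref{prop:contpsdo} by passing to the Sobolev scale. First note that for $x\in X^{u}$ the prescription $Df(x)=D^{u}(f^{u})(x)$ is intrinsic and unambiguous: since $f\in C^{0,\infty}(X,\tE)$, its restriction $f^{u}$ lies in $C^{\infty}(X^{u};\tE^{(u)})$, and $D^{u}\in \Psi(X^{u};\tE^{(u)})$ carries it to a smooth section of $\tE^{(u)}$, so $Df$ is a well-defined section of $\tE$ that is smooth along every fiber $X^{u}$. What remains is the continuity requirement in the definition of $C^{0,\infty}$: over any open $U\subseteq G^{0}$ that trivializes the fiber bundle, so that $X\mid_{U}\sim U\x Z$ and $\tE\mid_{U}\sim U\x E$, one must show that $u\mapsto (Df)^{u}$ is continuous from $U$ into $C^{\infty}(Z,E)$. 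Since $G^{0}$ is covered by such $U$, this is enough.

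Over such a $U$, two facts are available. First, $f\in C^{0,\infty}(X,\tE)$ says exactly that $u\mapsto f^{u}$ is continuous from $U$ into $C^{\infty}(Z,E)$. Second, the hypothesis that $D=\{D^{u}\}$ is a pseudodifferential family means, by Definition~\ref{def:psdo} together with the realization of $\Psi(X,\tE)$ as a fiber bundle over $G^{0}$ with fiber $\Psi(Z;E)$, that $u\mapsto D^{u}$ is continuous from $U$ into $\Psi(Z;E)$, the latter carrying its Fr\'{e}chet topology. Under these identifications $(Df)^{u}=D^{u}(f^{u})$, so the proposition reduces to showing that the map $u\mapsto (D^{u},f^{u})\mapsto D^{u}(f^{u})$ is continuous into $C^{\infty}(Z,E)$.

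To obtain this I would exploit the compactness of $Z$ and pass to the Sobolev scale: $C^{\infty}(Z,E)=\varprojlim_{s}H^{s}(Z,E)$ as a Fr\'{e}chet space, with $C^{\infty}(Z,E)\hookrightarrow H^{s}(Z,E)$ continuous for every $s$. For each $s$, an element of $\Psi(Z;E)$ extends to a bounded operator on $H^{s}(Z,E)$, and the resulting inclusion $\Psi(Z;E)\hookrightarrow B(H^{s}(Z,E))$ is continuous; this is the Sobolev-space form of Proposition~\ref{prop:contpsdo} and follows from it by the standard device of conjugating a zero-order operator by a fixed elliptic operator of order $s$ (using that composition is continuous in the symbol topologies). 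Consequently $u\mapsto D^{u}$ is continuous from $U$ into $B(H^{s}(Z,E))$ and $u\mapsto f^{u}$ is continuous from $U$ into $H^{s}(Z,E)$, and since the evaluation $B(H^{s}(Z,E))\x H^{s}(Z,E)\to H^{s}(Z,E)$ is jointly continuous (it is a bounded bilinear map of Banach spaces), the composite $u\mapsto D^{u}(f^{u})$ is continuous from $U$ into $H^{s}(Z,E)$. As this holds for all $s$ and $C^{\infty}(Z,E)$ carries the projective-limit topology, $u\mapsto D^{u}(f^{u})=(Df)^{u}$ is continuous into $C^{\infty}(Z,E)$, which is exactly the statement that $Df\in C^{0,\infty}(X,\tE)$.

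The only genuinely analytic ingredient here is the continuity of the inclusion $\Psi(Z;E)\hookrightarrow B(H^{s}(Z,E))$; the rest is the bookkeeping of local trivializations and the passage to the projective limit. I expect that ingredient to be the main (though essentially routine) obstacle: alternatively it could be proved directly from the oscillatory-integral formula (\ref{eq:ayD}), differentiating under the integral sign and using the order-zero symbol estimates (\ref{symbol}) together with the rapid decay of the Fourier transform of a compactly supported section, so as to bound each $C^{\infty}$-seminorm of $a(x,D_{x})h$ by a product of a symbol seminorm of $a$ and a $C^{\infty}$-seminorm of $h$.
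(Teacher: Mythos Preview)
Your argument is correct, but it takes a different route from the paper's. The paper localizes on $X$ rather than on $G^{0}$: by a partition of unity it reduces to $D=\phi D\psi$ with $\phi,\psi\in C_{c}^{\infty}(V)$ for a basic open $V\sim p(V)\times Y$, and then invokes H\"{o}rmander's joint continuity theorem for the map $(a,h)\mapsto a(x,D_{x})h$ from $S^{0}\times C_{c}^{\infty}$ into $C^{\infty}$ (\cite[Theorem~18.1.6]{Hor}) to conclude directly that $u\mapsto D^{u}(f^{u})$ is continuous into $C^{\infty}$. Your approach instead exploits the fiber-bundle description of $\Psi(X,\tE)$ over $G^{0}$ and the compactness of $Z$, passing through the Sobolev scale and reducing to the Banach-space bilinear estimate; this has the merit of staying close to the tools already set up in the paper (Proposition~\ref{prop:contpsdo}) rather than importing an external theorem, at the cost of the extra step of extending Proposition~\ref{prop:contpsdo} from $L^{2}$ to every $H^{s}$. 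Both proofs ultimately rest on the same analytic fact (symbol seminorms control operator seminorms), just packaged differently: the paper's citation to H\"{o}rmander is shorter, while your Sobolev argument is more self-contained.
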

\begin{proof}
Using a partition of unity argument (cf. \cite[p.23]{Eg}), we can suppose that
there is a basic open set $V\sim p(V)\x Y$ in $X$
trivializing $\tE$ and functions
$\phi,\psi\in C_{c}^{\infty}(V)$ such that $D=\phi D\psi$. Since $u\to (\psi
f)^{u}$ is continuous from $G^{0}$ into $C_{c}^{\infty}(Y,\C^{k})$ and $D$ is a
continuous family, it follows by the joint continuity of pseudodifferential
operators on $C_{c}^{\infty}$ functions (\cite[Theorem 18.1.6]{Hor}) that
$Df\in C_{c}^{0,\infty}(X,\tE)$.
\end{proof}

\begin{definition}
A {\em parametrix} for $D\in \Psi(X;\tE)$ is a pseudodifferential family
$P=\{P^{u}\}$ such that for each $u$, $P^{u}$ is a parametrix for $D^{u}$, i.e.
each of $D^{u}P^{u}-I, P^{u}D^{u}-I$ is a ``smoothing'' operator, an element
$T\in \Psi(X^{u};\tE^{u})$ for which $\si(T)=0$.
\end{definition}

\begin{proposition} \label{prop:symb}
Let $D$ be an elliptic,
pseudodifferential family on $X$. Then there exists a
parametrix $P$ for $D$.
\end{proposition}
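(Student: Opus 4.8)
The plan is to construct the parametrix $P$ fiberwise, using ellipticity to invert the principal symbol, and then to verify that the resulting family $\{P^u\}$ satisfies the continuity/local-triviality requirement of Definition~\ref{def:psdo}. First I would note that since $D$ is elliptic, for each $u$ the section $\si(D^u)$ is an invertible element of $\text{Symb}(X^u;\tE^{(u)})$; its pointwise inverse $\si(D^u)^{-1}$ is again a smooth homomorphism of the sphere bundle, so it serves as the principal symbol of a candidate parametrix. The classical construction (see \cite{AS1}, \cite[Ch.~4]{Wells}) then produces, from a symbol whose principal part is $\si(D^u)^{-1}$, an operator $Q^u\in\Psi(X^u;\tE^{(u)})$ with $\si(Q^u)=\si(D^u)^{-1}$, and one checks in the usual way (composing symbols, using that the principal symbol is multiplicative) that $D^uQ^u-I$ and $Q^uD^u-I$ have vanishing principal symbol, hence are smoothing. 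This gives condition (i) of the definition of a parametrix for each fixed $u$.

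The real content is condition (ii): the family $\{P^u\}$ must be a pseudodifferential family in the sense of Definition~\ref{def:psdo}, i.e.\ locally, over a basic open set $V\sim p(V)\x Y$ trivializing $\tE$, the symbols $a^u$ of $P^u$ must vary continuously with $u\in p(V)$. Here I would use the fiber-bundle description of $\Psi(X,\tE)$ recalled after Definition~\ref{def:psdo}: $\Psi(X,\tE)$ is a fiber bundle over $G^0$ with fiber $\Psi(Z;E)$ and structure group $H$, and $D$ being a pseudodifferential family means exactly that $u\mapsto D^u$ is a continuous section. By Proposition~\ref{prop:contpsdo} the passage $D^u\mapsto\si(D^u)$ and its extension $\tilde D^u$ are continuous; since inversion in $\text{Symb}(Z;E)$ (the sphere-bundle homomorphisms) is continuous for the sup-norm on the invertible ones, $u\mapsto\si(D^u)^{-1}$ is continuous. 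One then needs that the classical parametrix construction, viewed as a map from (invertible principal symbols, or from elliptic operators) into $\Psi(Z;E)$, can be performed continuously in the parameter. I would carry this out by fixing a single local quantization on a coordinate cover of $Z$ trivializing $E$, choosing the parametrix symbol by the standard asymptotic summation $q\sim\sum_{j\ge 0}q_{-j}$ where $q_0=\si(D^u)^{-1}$ and the lower-order terms $q_{-j}$ are universal polynomial expressions in the derivatives of the symbol of $D^u$ and of $q_0$ (hence depend continuously on $u$ in the Fréchet topology of symbols), and then invoke the joint continuity statements for pseudodifferential operators (\cite[Theorem~18.1.6]{Hor}) together with Proposition~\ref{prop:contpsdo}. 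This produces a continuous section $u\mapsto P^u$ of $\Psi(X,\tE)$, which is exactly the assertion that $P=\{P^u\}$ is a pseudodifferential family.

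I expect the main obstacle to be precisely this uniformity: making sure that the order-by-order parametrix construction can be organized so that every intermediate quantity depends continuously (in the appropriate Fréchet or operator topology) on $u$, and that the asymptotic summation — which involves cutting off the $q_{-j}$ at increasingly large frequencies — can be done with cut-offs chosen independently of $u$ over a compact piece of $G^0$. Compactness of the fiber $Z$ and $G$-compactness of $G^0$ (so that only finitely many trivializing patches are needed, over a relatively compact piece of the base) are what make this feasible; once the local continuous sections are patched using a partition of unity on $G^0$, condition (ii) of Definition~\ref{def:psdo} holds and, combined with the fiberwise statement, $P$ is the desired parametrix. Note that nothing here uses invariance of $D$ under $G$ — the parametrix is not claimed to be $G$-invariant, only to be a pseudodifferential family; invariance will presumably be arranged separately, or is not needed for the subsequent Fredholm argument.
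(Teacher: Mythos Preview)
Your approach is essentially the paper's: invert the principal symbol, quantize it over a cover of basic open sets using the standard construction (the paper cites \cite[Ch.~IV, Theorem~3.15]{Wells}), and observe that the resulting $P$ has $\si(P)=\si(D)^{-1}$ so that $\si(D^{u}P^{u}-I)=\si(P^{u}D^{u}-I)=0$ for each $u$. One simplification you should make: the ``main obstacle'' you anticipate---uniform control of the asymptotic summation $q\sim\sum_{j\ge 0}q_{-j}$ in $u$---is not actually present, because the paper's definition of parametrix only requires the remainders to have \emph{vanishing principal symbol} (the paper's ``smoothing'' means $\si(T)=0$, not order $-\infty$), so quantizing $q_{0}=\si(D^{u})^{-1}$ alone already suffices and no asymptotic sum is needed.
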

\begin{proof}
Note that $SX$ is a manifold over
$G^{0}$ with fiber $SZ$, and that $End(\pi_{S}^{*}(\tE))$ is a smooth vector
bundle over $SX$.
The vector bundle $End(\pi_{S}^{*}(\tE))$
restricts on each $X^{u}$ to $\pi_{S,u}^{*}(X^{u};End(\tE^{(u)}))$, where
$\pi_{S,u}:SX^{u}\to X^{u}$ is the canonical projection.
The ``principle
symbol'' $\si(D)$ of $D$, where $\si(D)(u)=\si(D^{u})$, is a section of the
vector bundle $End(\pi_{S}^{*}(\tE))$. In fact, $\si(D)$ is continuous -- this
follows from the continuity of the maps $u\to D^{u}\to \si(D^{u})$.

By ellipticity, the symbol $\si(D)^{-1}$ exists. In addition,
straight-forward computations show that, locally,
the matrices $b(u,x,\xi)$ belong
to $S^{0}$, and that $u\to b^{u}$ is continuous.  Following the argument of
\cite[Ch. IV,Theorem 3.15]{Wells} (which deals with the case of a single
pseudodifferential operator. i.e. where $G^{0}$ is a singleton), using basic
open sets for an open cover, one constructs a pseudodifferential family
$P$ on $X$ whose symbol is $\si(D)^{-1}$.  Then
(\cite[Ch. IV, Theorem 4.4]{Wells}) each $P^{u}$ is a parametrix for $D^{u}$.
So $P$ is a parametrix for $D$.  (It can be shown that a product of two
pseudodifferential families is a pseudodifferential family
(cf. \cite[Theorem 3.16]{Wells}) but we won't need this.)
\end{proof}

\section{Construction of the analytic index}
We now consider invariant pseudodifferential families on the $G$-manifold $X$.
To this end, there is a section action $g\to L_{g}$ of $G$ on
$C^{\infty}_{c}(X,\tE)$, where
$L_{g}:C^{\infty}(X^{s(g)},\tE^{(s(g))})\to
C^{\infty}(X^{r(g)},\tE^{(r(g))})$ is the
diffeomorphism given by:
\begin{equation}  \label{eq:Lg}
L_{g}f(x)=g[f(g^{-1}x)]\hspace{.1in} (x\in X^{r(g)}).
\end{equation}
There is a natural (algebraic) action of the groupoid $G$ on $\Psi(X,\tE)$
given by: $g\to L_{g}D^{s(g)}L_{g^{-1}}$.
Note that $L_{g}D^{s(g)}L_{g^{-1}}$
belongs to $\Psi(X^{r(g)};E^{(r(g))})$ by \cite{Hor2}.

\newcommand{\bga}{\textbf{B}(\gah)}
\newcommand{\kga}{\textbf{K}(\gah)}
\newcommand{\bmfH}{\textbf{B}(\mfH)}
\newcommand{\kmfH}{\textbf{K}(\mfH)}
\newcommand{\bgga}{\textbf{B}_{G}(\mfH)}
\newcommand{\kgga}{\textbf{K}_{G}(\mfH)}
\newcommand{\gach}{\Ga_{c}(\mfH)}
\newcommand{\gah}{\Ga(\mfH)}

\begin{definition}    \label{def:invpsdo}
The pseudodifferential family $D$ is called {\em invariant} if
\begin{equation}  \label{eq:rgd}
L_{g}D^{s(g)}L_{g^{-1}}=D^{r(g)}.
\end{equation}
\end{definition}

The above notion is well-known in the literature (e.g.
\cite{Cointeg,Connesbook,MontP,NWX}). We now discuss the
symbol $\si(D)$ of an invariant pseudodifferential family $D$.

Since $D$ is invariant then its symbol $\si(D)$ is also
invariant under the natural action of $G$ on the sections of
$\pi_{S}^{*}End(\tE))$.
The action of $G$ on the sections of this bundle is given by:
$g\to g\al
g^{-1}$. The map $D\to \si(D)$ is equivariant. One easy way to prove this is
just to calculate the symbol of $g\si(D^{s(g)})g^{-1}$ using the explicit
formula (\ref{eq:siD}) and the fact that $\ell_{g}$ is a diffeomorphism from
$X^{s(g)}$ onto $X^{r(g)}$. The details are left to the reader.

Suppose now that $D$ is, in addition, elliptic, and let $P$ be a parametrix
for $D$ (Proposition~\ref{prop:symb}).
There is actually an invariant parametrix $P_{1}$ for $D$.  This can be
proved by taking
\[  P_{1}^{u}f(x)=\int_{G^{u}}c(g^{-1}x)
(L_{g}P^{s(g)}L_{g^{-1}})f(x)\,d\la^{u}(g)   \]
where $p(x)=u$,
$f\in C^{\infty}(X^{u},\tE^{(u)})$, and $c$ is smooth and as in
Proposition~\ref{prop:propc}.  In this connection, cf.
\cite{CoMos,Kasp1,Phillips}. However, rather than giving the details of the
proof of the existence of an invariant parametrix $P_{1}$, it is more
convenient to deal with the corresponding question at the Fredholm level
later (Proposition~\ref{prop:parinv}), where the corresponding proof in the
group case by Phillips adapts easily.

For the rest of this paper, $D$ is an invariant, elliptic pseudodifferential
family on $X$ as above. The analytic index of $D$ will be constructed by
adapting the approach of N. C. Phillips in \cite{Phillips} to equivariant
K-theory for proper actions. (This in turn was motivated by the work of Segal
(\cite{Segal1,Segal2}.)

We first construct a Hilbert bundle $(\mfH,\tp)$ over $G^{0}$. By definition
(\cite[p.7]{Phillips}), a Hilbert bundle is a locally trivial fiber bundle with
a Hilbert space $H$ as fiber and with structure group $U(H)$, the unitary
group of $H$ with the strong operator topology. The fiber over $u\in G^{0}$ of
this bundle is $L^{2}(X^{u},\tE^{(u)},\mu^{u})$.
The map $\tp$ just takes any
$f\in L^{2}(X^{u},\tE^{(u)},\mu^{u})$ to $u$.

\begin{proposition}          \label{prop:hbundle}
Let $\mu$ be a smooth measure on $Z$ and give $E$ a hermitian metric
$\lan ,\ran$. Then, in a natural way, the bundle
$\mfH$ is a Hilbert bundle with fiber $L^{2}(Z,E,\mu)$, and it admits a
canonical continuous unitary action of the Lie groupoid $G$.
\end{proposition}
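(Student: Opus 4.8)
The plan is to construct the local trivializations of $\mfH$ from the bundle structure of $X$ and $\tE$ over $G^0$, and then to build the $G$-action fiberwise from the maps $\ell_g$, using the $G$-invariance of the measures $\mu^u$ to guarantee unitarity.

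First I would fix a point $u_0 \in G^0$ and an open neighborhood $U$ of $u_0$ over which $X$ and $\tE$ trivialize simultaneously, i.e. $X\mid_U \sim U\x Z$ and $\tE\mid_U \sim U\x E$ (these exist by the fiber-bundle hypotheses on $(X,p)$ and $(\tE,p\circ q)$). Restricting to such $U$, the fiber of $\mfH$ over $v\in U$ is $L^2(X^v,\tE^{(v)},\mu^v)$, which the trivialization identifies with $L^2(Z,E,\mu_v)$ for a smooth positive measure $\mu_v$ on $Z$ obtained by transporting $\mu^v$ through the diffeomorphism $X^v\to Z$. Shrinking $U$ if necessary, one checks that $v\to \mu_v$ is a smooth (hence continuous) family of measures on $Z$ all mutually equivalent to the fixed $\mu$, with Radon–Nikodym derivatives $d\mu_v/d\mu$ smooth and strictly positive; the map $\xi\mapsto (d\mu_v/d\mu)^{1/2}\xi$ is then a unitary from $L^2(Z,E,\mu_v)$ onto $L^2(Z,E,\mu)$ depending continuously on $v$ in the strong operator topology. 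Composing, we get a local trivialization $\mfH\mid_U \sim U\x L^2(Z,E,\mu)$, and the transition functions between two such charts are computed from the transition functions of $X$ and $\tE$ together with these Radon–Nikodym factors; one verifies they take values in $U(L^2(Z,E,\mu))$ and are continuous for the strong operator topology. This establishes that $\mfH$ is a Hilbert bundle with fiber $L^2(Z,E,\mu)$ in the sense of \cite[p.7]{Phillips}.

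Next, for the $G$-action: for $g\in G$ define $\Lambda_g: L^2(X^{s(g)},\tE^{(s(g))},\mu^{s(g)})\to L^2(X^{r(g)},\tE^{(r(g))},\mu^{r(g)})$ by $(\Lambda_g f)(x) = g[f(g^{-1}x)]$ for $x\in X^{r(g)}$, i.e. the $L^2$-version of the section action $L_g$ of (\ref{eq:Lg}). Since $\ell_g: X^{s(g)}\to X^{r(g)}$ is a diffeomorphism, each $\ell_g\mid_{\tE}$ is a vector bundle isomorphism, and the hermitian metric on $\tE$ is $G$-isometric (Proposition~\ref{prop:hermG}), the integrand $\mid g[f(g^{-1}x)]\mid^2 = \mid f(g^{-1}x)\mid^2$; combined with the $G$-invariance (\ref{eq:invarmu}) of the measures $\mu^u$, this gives $\norm{\Lambda_g f}_{r(g)} = \norm{f}_{s(g)}$, so each $\Lambda_g$ is a surjective isometry, i.e. unitary, with $\Lambda_g^{-1}=\Lambda_{g^{-1}}$ and $\Lambda_{gh}=\Lambda_g\Lambda_h$ by the cocycle identity for $L_g$. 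Thus $g\to\Lambda_g$ is an algebraic action of $G$ on the bundle $\mfH$ by unitaries.

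Finally I would verify continuity of the action, i.e. that the map $(g,f)\to \Lambda_g f$, from the fibered product of $(G,s)$ with $\mfH$ into $\mfH$, is continuous. This I expect to be the main obstacle, since it requires controlling the $L^2$-norm jointly in $g$ and in the point of $\mfH$, across the varying fibers. The strategy is local: work in a basic open set $V\sim r(V)\x W$ of $G$ and compatible trivializing charts for $X$ and $\tE$ over $r(V)$ and over $s(V)$, so that $\Lambda_g$ is expressed, after the unitary identifications above, as multiplication by a continuous family of smooth bundle maps of $E$ composed with a diffeomorphism of $Z$ depending continuously on $g$, plus a Radon–Nikodym factor. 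On a dense set of $f$ — namely those coming from $C_c^\infty$ sections trivialized over the relevant chart — joint continuity of $(g,f)\to \Lambda_g f$ in the $L^2$-norm follows from continuity of composition with a continuously-varying diffeomorphism and a continuously-varying bundle map, together with dominated convergence using a fixed compact support; an $\epsilon/3$ argument using the uniform boundedness $\norm{\Lambda_g}=1$ then upgrades this to joint continuity on all of $\mfH$, which is exactly strong-operator continuity of the action along $\mfH$. This makes $\mfH$ a continuous unitary $G$-bundle, completing the proof.
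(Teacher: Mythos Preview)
Your approach is essentially the paper's: build local trivializations from the fiber-bundle data for $X$ and $\tE$ together with Radon--Nikodym corrections, define the action by the $L^2$-extension of $L_g$, and verify strong continuity by a density-plus-$\eps/3$ argument (the paper approximates $f$ by $F\in C_c(Z,E)$ and uses $\norm{L_{g_n}F-L_gF}_\infty\to 0$, which is exactly your scheme).

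There is one technical point you skip that the paper handles explicitly. The trivialization $\tE\mid_U\sim U\x E$ is a vector-bundle isomorphism but need not carry the ($G$-averaged) hermitian metric on $\tE$ to the fixed metric $\lan,\ran$ on $E$, so your identification of $L^2(X^v,\tE^{(v)},\mu^v)$ with $L^2(Z,E,\mu_v)$ is not in general an isometry, and hence your map $\xi\mapsto (d\mu_v/d\mu)^{1/2}\xi$ is not unitary onto $L^2(Z,E,\mu)$. The paper fixes this by introducing, for each $x$ with $\tau_\al^u(x)=z$, a positive-definite $A_\al^x\in End(E^z)$ satisfying $\lan\xi,\xi\ran^x=\lan A_\al^x\phi_\al(\xi),\phi_\al(\xi)\ran^z$, and including the factor $(A_\al^x)^{1/2}$ in the trivialization $\chi_\al$. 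Without this correction your transition functions would land in $GL(L^2(Z,E,\mu))$ rather than $U(L^2(Z,E,\mu))$, so the Hilbert-bundle structure would not be established.
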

\begin{proof}
We will establish the Hilbert
bundle structure of $\mfH$ by constructing a
cocycle $\{g_{\al\bt}\}$ for that bundle (e.g. \cite[p.48]{BottTu}).
Let $\{(U_{\al},\phi_{\al})\}$ be an open cover of $G^{0}$ which trivializes the fiber
bundle $(\tE,\pi\circ p)$.  So each $\phi_{\al}$ is a
fiber preserving homeomorphism from
$\tE\mid_{U_{\al}}$ onto $U_{\al}\x E$ which is a vector bundle isomorphism
on fibers.  For each $u$, let
$\tau_{\al}^{u}:X^{u}\to Z$ be the diffeomorphism
determined by $\phi_{\al}$.
Let $x\in X$, and set $p(x)=u$,
$\tau_{\al}^{u}(x)=z$. Let $\phi_{\al}^{u}$ be the
restriction of $\phi_{\al}$ to $\tE^{(u)}$. Then there exists a positive
definite, invertible
element $A_{\al}^{x}\in End(E^{z})$
such that for all $\xi\in \tE^{x}$, we have
$\lan \xi,\xi\ran^{x}=
\lan A_{\al}^{x}\phi_{\al}(\xi),\phi_{\al}(\xi)\ran^{z}$,
and the map $x\to A_{\al}^{x}$ is continuous. Define
$\chi_{\al}:\mfH\mid_{U_{\al}}\to U_{\al}\x L^{2}(Z,E,\mu)$ by:
\[      \chi_{\al}f^{u}(z)=
r_{\al}^{u}(z)^{1/2}(A_{\al}^{x})^{1/2}(f^{u}\circ (\phi_{\al}^{u})^{-1})   \]
where $r_{\al}^{u}=d((\tau_{\al}^{u})^{*}\mu^{u})/d\mu$.
It is left to the reader to check that the map
$\chi_{\al}\chi_{\bt}^{-1}:U_{\al}\cap U_{\bt}\to
B(L^{2}(Z,E,\mu))$ is a cocycle with values in $U(L^{2}(Z,E,\mu))$.  Then
$\mfH$ is the Hilbert bundle constructed in the standard way with
transition functions $g_{\al\bt}=\chi_{\al}\chi_{\bt}^{-1}$.  To check that
$g_{\al\bt}$ is continuous into $U(L^{2}(Z,E,\mu))$ with the strong operator
(=weak operator) topology, one uses elementary measure theory.

Turning to the groupoid action on $\mfH$, for each $g\in G$,
by the invariance
of the $\mu^{u}$'s and the isometric action of $G$ on $\tE$, the map $L_{g}$
extends from $C(X^{s(g)},E^{(s(g))})$ to give a unitary element of
$B(L^{2}(X^{s(g)},\tE^{(s(g))},\mu^{s(g)}),
L^{2}(X^{r(g)},\tE^{(r(g))},\mu^{r(g)}))$.
Clear\-ly, algebraically, $\mfH$ is a $G$-space with unitary action $g\to L_{g}$
where $L_{g}$ has the same formula as in (\ref{eq:Lg}).
It remains to be shown that
the product map $(g,f)\to L_{g}f$ from $G*\mfH$ into $\mfH$ is continuous.
Suppose then that $g_{n}\to g$ in $G$, $f_{n}\to f$ in $\mfH$ with
$s(g_{n})=\tp(f_{n})$ and $s(g)=\tp(f)$.  Translating this into local terms, we can
suppose that $U,V$ are open subsets
of $G^{0}$, such that $s(g_{n}), s(g)\in U, r(g_{n}), r(g)\in V$ and $Z,\tE$
are trivial over $U$ and $V$. In
addition, we can suppose that $f_{n}, f\in L^{2}(Z,E,\mu)$, and, regarding the
$L_{g_{n}}, L_{g}$ as unitary on $L^{2}(Z,E,\mu)$, we have to show that
$\norm{L_{g_{n}}f_{n}-L_{g}f}_{2}\to 0$. Given $\eps>0$, there exists $F\in
C_{c}(Z,E)$ such that $\norm{F-f}_{2}<\eps$.  By the continuity of the action
of $G$ on $X$
we have $\norm{L_{g_{n}}F-L_{g}F}_{\infty}\to 0$, and so
$\norm{L_{g_{n}}F-L_{g}F}_{2}\to 0$. An elementary triangular inequality
argument then shows that $\norm{L_{g_{n}}f_{n}-L_{g}f}_{2}<\eps$ eventually, so
that $\norm{L_{g_{n}}f_{n}-L_{g}f}_{2}\to 0$ as required.
\end{proof}

We now recall some facts about morphisms on Hilbert bundles 
(\cite[Chapter 1]{Phillips}). A {\em morphism} on $\mfH$ is a continuous map
$T:\mfH\to \mfH$ which restricts to a linear map $T^{u}$ on each fiber
$\mfH^{u}$ and is such that the adjoint map $T^{*}$ on $\mfH$, where
$(T^{*})^{u}=(T^{u})^{*}$, is also continuous. The morphism $T$ is called
{\em equivariant} if for each $g\in G$, we have
\begin{equation}
L_{g}T^{s(g)}L_{g^{-1}}=T_{r(g)}.        \label{eq:LgTrg}
\end{equation}
In particular, if $T$ is equivariant, then
\begin{equation}
\norm{T^{s(g)}}=\norm{T^{r(g)}}  \label{eq:tsgrg}
\end{equation}
for all $g$.

The morphism $T$ is called {\em
bounded} if
\begin{equation}
\norm{T}=\sup_{u\in G^{0}}\norm{T^{u}}<\infty.   \label{eq:normT}
\end{equation}
The set of all bounded morphisms on $\mfH$ is a unital $\cs$ $\bmfH$
in the obvious way under the norm of
(\ref{eq:normT}).

It follows by the $G$-compactness of $G^{0}$, the local boundedness of
morphisms (\cite[Cor. 1.8]{Phillips})
and (\ref{eq:tsgrg}) that every
equivariant morphism is automatically bounded.  The set of equivariant
morphisms of $\mfH$ is a unital $C^{*}$-subalgebra of $\bmfH$, and will be
denoted by $\bgga$.

An element $T\in \bmfH$ is called {\em compact} if for every compact
subset $A$ of $G^{0}$, the set $\{T^{u}(\xi^{u}): \xi^{u}\in \mfH^{u},
\norm{\xi^{u}}\leq 1, u\in A\}$ has compact closure in $\mfH$.      
The set of bounded compact morphisms is a closed ideal of $\bmfH$
and is denoted by $\kmfH$ (cf. \cite[Lemma 1.12]{Phillips}).  Similarly,
the set $\kgga$ of equivariant compact morphisms is a closed ideal of
$\bgga$.

Next we introduce the equivariant compact morphisms which correspond to the
``rank 1'' operators on  a Hilbert module.
Let $e_{1}, e_{2}\in C_{c}(X,\tE)$ and define
a section $h_{e_{1},e_{2}}$ of $\tE\boxtimes \tE$ by:
\begin{equation}  \label{eq:he1e2}
h_{e_{1},e_{2}}(x,y)=
\int L_{g}e_{1}(x)\boxtimes L_{g}e_{2}(y)\,d\la^{p(x)}(g).
\end{equation}
Note that the section $h_{e_{1},e_{2}}$ in (\ref{eq:he1e2}) has compact
support and is continuous by the section version of
Proposition~\ref{prop:etainvar}.

Now define the operator $T(e_{1},e_{2})^{u}$ on
$\mfH^{u}$ by:
\begin{equation} \label{eq:te1e2}
T(e_{1},e_{2})^{u}(\xi)(x)=\lan \xi, h_{e_{1},e_{2}}(x,.)\ran
\end{equation}
where the inner product on the right-hand side of (\ref{eq:te1e2}) is
calculated in $\mfH^{u}=L^{2}(X^{u},\tE^{(u)},\mu^{u})$.
Then $T(e_{1},e_{2})^{u}$ is a compact operator since it is a kernel operator
whose kernel is continuous with compact support.  The equation (\ref{eq:te1e2})
can be usefully written:
\begin{equation}  \label{eq:te1e2'}
T(e_{1},e_{2})^{u}(\xi)=\int L_{g}e_{1}\ov{\lan L_{g}e_{2},\xi\ran}\,d\la^{u}(g).
\end{equation}

\begin{proposition}  \label{prop:span}
The map $T(e_{1},e_{2})\in \kgga$, and the span of morphisms of the form
$T(e_{1},e_{2})$ is dense in $\kgga$.
\end{proposition}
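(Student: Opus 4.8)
The plan is to prove the statement in two parts: first that each $T(e_1,e_2)$ is indeed an equivariant compact morphism, and second that their span is dense in $\kgga$. For the first part, I would argue that $T(e_1,e_2)$ is a morphism by noting that its fiberwise action $T(e_1,e_2)^u$ is a kernel operator with continuous, compactly supported kernel $h_{e_1,e_2}$; the continuity of $h_{e_1,e_2}$ (already noted in the excerpt via the section version of Proposition~\ref{prop:etainvar}) gives continuity of $T(e_1,e_2)$ as a map $\mfH\to\mfH$, and since $h_{e_1,e_2}(x,y) = \overline{h_{e_2,e_1}(y,x)}$ the adjoint is $T(e_2,e_1)$, which is again continuous, so $T(e_1,e_2)\in\bmfH$. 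Compactness (in the sense of the excerpt: uniformly over $u$ in a compact set) follows because on a compact $A\subset G^0$ the kernels $h_{e_1,e_2}$ restricted to the relevant compact piece of $X*X$ have a common compact support and are uniformly continuous, so an Arzel\`a--Ascoli argument shows the image of the unit balls has compact closure. Equivariance is the computation
\[
L_g T(e_1,e_2)^{s(g)} L_{g^{-1}}\xi
= \int L_g L_h e_1\,\overline{\lan L_h e_2, L_{g^{-1}}\xi\ran}\,d\la^{s(g)}(h)
= \int L_{gh} e_1\,\overline{\lan L_{gh}e_2,\xi\ran}\,d\la^{s(g)}(h),
\]
using unitarity of $L_g$ inside the inner product, followed by the left-invariance (\ref{eq:invar}) of the Haar system to rewrite the integral over $G^{s(g)}$ as one over $G^{r(g)}$; this yields $T(e_1,e_2)^{r(g)}\xi$, so (\ref{eq:LgTrg}) holds. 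Hence $T(e_1,e_2)\in\kgga$.

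For density, I would follow Phillips's group-case argument (\cite[around Lemma~2.14 and its neighbourhood]{Phillips}). Let $T\in\kgga$ and fix $\eps>0$. The strategy is to approximate $T$ locally by finite-rank equivariant pieces and then patch using a $G$-partition of unity. First, using Proposition~\ref{prop:propc}, pick $c\in C_c(X)$ with $\int_{G^{p(x)}}c(g^{-1}x)\,d\la^{p(x)}(g)=1$; since $T$ is compact, $T^u$ restricted to $u$ in the (compact) support-projection of $c$ can be approximated in operator norm, uniformly in $u$, by finite-rank operators. The point is to choose these finite-rank approximants to be of the form $\xi\mapsto \sum_j \langle \xi, \beta_j^u\rangle\alpha_j^u$ with $\alpha_j,\beta_j\in C_c(X,\tE)$ — this uses that $C_c(X,\tE)$ is dense in each $\mfH^u$ together with a compactness/continuity argument over the compact base piece. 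Then one ``averages'' such a local finite-rank operator against $c$ to produce an equivariant operator: schematically, replacing $T^u$ by $\int_{G^u} c(g^{-1}x)\,L_g(\text{local approx})^{s(g)} L_{g^{-1}}\,d\la^u(g)$, which by the identity (\ref{eq:te1e2'}) is exactly a sum of terms $T(\alpha_j, c\cdot\beta_j)$ (up to keeping track of where $c$ enters), hence lies in the span of the $T(e_1,e_2)$'s. The equivariance of $T$ and the normalization of $c$ are used to check that this averaged operator is close to $T$ in the norm (\ref{eq:normT}): at a point $x$ with $p(x)=u$ one writes $T^u = \int c(g^{-1}x) L_g T^{s(g)} L_{g^{-1}}\,d\la^u(g)$ using equivariance and the $c$-normalization, and compares termwise.

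The main obstacle I anticipate is the density half, specifically the step of producing the local finite-rank approximants \emph{in the form $T(e_1,e_2)$} with $e_i$ genuinely compactly supported continuous sections, and controlling the approximation \emph{uniformly} over the compact base piece rather than merely fiberwise. One must be careful that the finite-rank approximation of $T^u$ can be chosen continuously (or at least with uniform error) in $u$ — this is where the local triviality of $\mfH$ and the hypothesis that $G^0$ is $G$-compact (so that, modulo the $G$-action, everything happens over a compact set) are essential, and it is precisely the place where Phillips's argument in the group case has to be transcribed with the fibered/groupoid bookkeeping. The averaging-against-$c$ trick and the invariance computation are then routine once that uniform local approximation is in hand. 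I would also remark that the closedness of $\kgga$ (already noted in the excerpt) is what lets us conclude from ``dense span'' that we have genuinely described $\kgga$.
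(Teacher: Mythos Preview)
Your plan matches the paper's: the first half is handled identically (continuity of the kernel, adjoint $T(e_2,e_1)$, compactness from the continuous compactly-supported kernel, and the equivariance computation via left-invariance of the Haar system), and the density half adapts Phillips just as the paper does. The one difference worth flagging is in the averaging weight. You use $c\in C_c(X)$ from Proposition~\ref{prop:propc}; the paper instead applies Proposition~\ref{prop:aver} to $G^{0}$, obtaining a $G$-partition of unity $\{f_\gamma\}$ subordinate to an open cover $\{U_u\}$ of $G^{0}$ on each member of which a local finite-rank approximation $\|R^v-\sum_i(\xi_{i,u}\otimes\eta_{i,u})^v\|<\eps$ already holds. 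Setting $h=\sum_\gamma f_\gamma\in C_c(G^{0})$ and taking the finitely many $\xi_j,\eta_j$ of the form $f_\gamma^{1/2}\xi_{i,u}$, $f_\gamma^{1/2}\eta_{i,u}$ yields $\|h(u)R^u-\sum_j(\xi_j\otimes\eta_j)^u\|\le\eps\,h(u)$ for \emph{all} $u$, and then the averaging estimate closes in one line precisely because the weight lives on $G^{0}$: one has $\int h(g^{-1}u)\,d\la^{u}(g)=1$, so the integrated error is $\le\eps$. With your $c$ on $X$ the normalization $\int c(g^{-1}x)\,d\la^{p(x)}(g)=1$ is pointwise in $x\in X^u$ rather than in $u$, and after Minkowski the bound picks up a factor of the form $\|c\|_\infty\cdot\la^{u}(\{g:s(g)\in p(\mathrm{supp}\,c)\})$ that must be controlled separately; this is doable from properness and $G$-compactness of $G^{0}$, but it is an extra step the paper's choice avoids. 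Note also that the paper's partition-of-unity device simultaneously dissolves what you call the ``main obstacle'': the passage from fibrewise-local to globally-weighted finite-rank approximation is exactly the inequality $\|h(u)R^u-\sum_j(\xi_j\otimes\eta_j)^u\|\le\eps\,h(u)$, so one never needs a single approximant valid uniformly over a whole compact base piece before averaging.
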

\begin{proof}
We first show that $T(e_{1},e_{2})$ is well-defined and is
continuous on $\mfH$.  We just need to
show this on some trivialization $U\x L^{2}(Z,E,\mu)$ of $\mfH$.
Applying Lemma~\ref{lemma:cont} with $X*X$ in place of $X$,
$\mathcal{E}=\tE\boxtimes \tE$ and
\[  f'(g,x,y)=L_{g}e_{1}(x)\boxtimes L_{g}e_{2}(y)   \]
gives that $h_{e_{1},e_{2}}$ is continuous on $X*X$.
It follows that $T(e_{1},e_{2})$ is continuous
on $\mfH$ and that for any compact subset $A$ of $U$
(and hence of $G^{0}$) the set
\[  \{T(e_{1},e_{2})^{u}(\xi^{u}): \xi^{u}\in \mfH^{u},
\norm{\xi^{u}}\leq 1, u\in A\} \]
has compact closure in $\mfH$. Next, $T(e_{1},e_{2})$ is adjointable with
adjoint $T(e_{2},e_{1})$. So $T(e_{1},e_{2})\in \kmfH$.  We now show
that $T=T(e_{1},e_{2})$ is invariant. This follows from the argument below using
(\ref{eq:te1e2'}) and (\ref{eq:LgTrg}):
\beqns
T^{r(g_{0})}L_{g_{0}}\xi &=&
\int L_{g}e_{1}\ov{\lan L_{g}e_{2},L_{g_{0}}\xi\ran}\,d\la^{r(g_{0})}(g)  \\
&=& \int L_{g_{0}}[L_{g_{0}^{-1}g}e_{1}]
\ov{\lan L_{g_{0}^{-1}g}e_{2},\xi\ran}\,d\la^{r(g_{0})}(g)  \\
&=& \int L_{g_{0}}[L_{h}e_{1}]
\ov{\lan L_{h}e_{2},\xi\ran}\,d\la^{s(g_{0})}(h)  \\
&=& L_{g_{0}}T^{s(g_{0}}\xi.
\eeqns

For the last part of the proposition, we have to show that given
$R\in \kgga$ and $\eps>0$, then
there exist $\xi_{i},\eta_{i}$ $(1\leq i\leq n)$ in
$C_{c}(X,\tE)$ such that
\begin{equation}  \label{eq:rTxi}
\norm{R^{u} - \sum_{i=1}^{n} T(\xi_{i},\eta_{i})^{u}}\leq\eps
\end{equation}
for all $u\in G^{0}$.  To this end, we adapt the corresponding argument of
Phillips in the locally compact group case (\cite[pp.92-93]{Phillips}).
Restricting $R$ to  trivializing subsets of $G^{0}$, there is then
an open cover $\{U_{u}\}$ of $G^{0}$ such that for each $u$,
there exist $\xi_{i,u},\eta_{i,u}\in C_{c}(X,\tE)$ such that
\[  \norm{R^{v}-\sum_{i}(\xi_{i,u}\otimes \eta_{i,u})^{v}}<\eps   \]
for all $v\in U_{u}$.  Here
$(\xi_{i,u}\otimes \eta_{i,u})^{v}(w)=
(\xi_{i,u})^{v}\ov{\lan (\eta_{i,u})^{v},w\ran}$ for $w\in
L^{2}(X^{v},\tE^{(v)},\mu^{v})$.  Using
Proposition~\ref{prop:aver} with $G^{0}$ in place of $X$, there exists
a $G$-partition of unity
$\{f_{\ga}\}$ subordinate to the cover $\{U_{u}\}$ of $G^{0}$.  By
considering terms of the form
$f_{\ga}^{1/2}\xi_{i,u}\otimes f_{\ga}^{1/2}\eta_{i,u}$ and using the
$G$-compactness of $G^{0}$ and the invariance of the morphisms involved,
there exist
$\xi_{j},\eta_{j}\in C_{c}(X,\tE)$
($j$ in some finite index set $J$) such that for all $u\in G^{0}$,
\[  \norm{h(u)R^{u}-\sum_{j}(\xi_{j}\otimes \eta_{j})^{u}}\leq \eps h(u) \]
where $h=\sum_{\ga} f_{\ga}$. Then for any $u\in G^{0}$, we have
\beqns
\lefteqn{\norm{R^{u}-\int\sum_{j}L_{g}(\xi_{j}\otimes
\eta_{j})L_{g^{-1}}\,d\la^{u}(g)} } \\
& = & \norm{\int[h(g^{-1}u)R^{u}-\sum_{j}L_{g}(\xi_{j}\otimes\eta_{j})
L_{g^{-1}}]\,d\la^{u}(g)}\\
&=&\norm{\int L_{g}[h(g^{-1}u)R^{g^{-1}u}-
\sum_{j}(\xi_{j}\otimes\eta_{j})^{g^{-1}u}]L_{g^{-1}}\,d\la^{u}(g)} \\
&\leq & \int\norm{h(g^{-1}u)R^{g^{-1}u}-
\sum_{j}(\xi_{j}\otimes\eta_{j})^{g^{-1}u}}\,d\la^{u}(g) \\
&\leq &\int\epsilon h(g^{-1}u)\,d\la^{u}(g)= \eps.
\eeqns
Since $L_{g}(\xi_{j}\otimes\eta_{j})L_{g^{-1}}
=L_{g}\xi_{j}\otimes L_{g}\eta_{j}$,
we obtain (\ref{eq:rTxi}).
\end{proof}

An element $T\in \bmfH$ is called {\em Fredholm} if there exists
$S\in \bmfH$ such that both $ST-I, TS-I\in \kmfH$.

\begin{proposition}\label{prop:parinv} 
\be
\item Suppose that $T\in \bgga$ is Fredholm. Then there exists
$S\in \bgga$ such that both $ST-I,TS-I\in \kgga$.
\item  The elliptic pseudodifferential family $D=\{D^{u}\}$ defines, by
extending each $D^{u}$ to $\tilde{D}^{u}\in B(\mfH^{u})$
(Proposition~\ref{prop:contpsdo}) an invariant Fredholm morphism on $\mfH$.
\ee
\end{proposition}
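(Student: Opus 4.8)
The plan is to deduce both parts from a single averaging operation over $G$. Using the $G^{0}$-version of Proposition~\ref{prop:propc} (valid since $G^{0}$ is proper and $G$-compact), fix a non-negative $c\in C_{c}(G^{0})$ with $\int_{G^{u}}c(g^{-1}u)\,d\la^{u}(g)=1$ for all $u$, and for a morphism $R$ on $\mfH$ set
\[
(\mathcal{A}R)^{u}=\int_{G^{u}}c(g^{-1}u)\,L_{g}\,R^{s(g)}\,L_{g^{-1}}\,d\la^{u}(g).
\]
First I would record four properties of $\mathcal{A}$. (a) The integrand vanishes unless $s(g)\in\operatorname{supp}c$; since $G^{0}$ is a proper $G$-space, $\{g\in G:(r(g),s(g))\in A\x\operatorname{supp}c\}$ is compact for every compact $A\subseteq G^{0}$, so the integral converges and, for bounded $R$, $\norm{(\mathcal{A}R)^{u}}\le\norm{R}$ uniformly in $u$. (b) $\mathcal{A}R$ is a morphism: continuity of $\mathcal{A}R:\mfH\to\mfH$ follows from the Banach $G$-bundle version of Phillips' continuity lemma (\cite[Lemma 2.4]{Phillips}; cf.\ Proposition~\ref{prop:etainvar}), and $(\mathcal{A}R)^{*}=\mathcal{A}(R^{*})$ since each $L_{g}$ is unitary. (c) $\mathcal{A}R$ is equivariant: $L_{g_{0}}(\mathcal{A}R)^{s(g_{0})}L_{g_{0}^{-1}}=(\mathcal{A}R)^{r(g_{0})}$ results from the substitution $g\mapsto g_{0}g$, the relation $L_{g_{0}}L_{g}=L_{g_{0}g}$, the left-invariance (\ref{eq:invar}) of $\{\la^{u}\}$, and the identity $c(g^{-1}u)=c(s(g))$ for $g\in G^{u}$, exactly as in the computation in the proof of Proposition~\ref{prop:span}. (d) If $R\in\kmfH$ then $\mathcal{A}R\in\kgga$: it is equivariant by (c), and for compact $A\subseteq G^{0}$, the vectors $L_{g^{-1}}\xi$ (with $\norm{\xi}\le1$) together with the relative compactness of $\{R^{v}\eta:v\in\operatorname{supp}c,\ \norm{\eta}\le1\}$ and the continuity of the action map $L:G*\mfH\to\mfH$ over the compact set of relevant $g$ confine $\{(\mathcal{A}R)^{u}\xi:u\in A,\ \norm{\xi}\le1\}$ to the closed convex hull of a relatively compact set, hence to a relatively compact set by Mazur's theorem.

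For part (1): given $T\in\bgga$ Fredholm, choose $S_{0}\in\bmfH$ with $S_{0}T-I,\ TS_{0}-I\in\kmfH$ and put $S=\mathcal{A}S_{0}$, so $S\in\bgga$ by (a)--(c). Using the equivariance of $T$ in the form $L_{g^{-1}}T^{r(g)}=T^{s(g)}L_{g^{-1}}$ and $\int_{G^{u}}c(g^{-1}u)\,d\la^{u}(g)=1$, one computes $(ST-I)^{u}=\bigl(\mathcal{A}(S_{0}T-I)\bigr)^{u}$ and $(TS-I)^{u}=\bigl(\mathcal{A}(TS_{0}-I)\bigr)^{u}$; by (d), both $ST-I$ and $TS-I$ lie in $\kgga$.

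For part (2): by Proposition~\ref{prop:contpsdo} each $D^{u}\in\Psi(X^{u};\tE^{(u)})$ extends to $\tilde D^{u}\in B(\mfH^{u})$. I would first check that $\tilde D=\{\tilde D^{u}\}$ is a morphism on $\mfH$: read in a local trivialization $U\x L^{2}(Z,E,\mu)$, the map $u\mapsto\tilde D^{u}$ is norm-continuous (the continuous section $u\mapsto D^{u}$ of $\Psi(X,\tE)$ composed with the continuous map $D\mapsto\tilde D$ of Proposition~\ref{prop:contpsdo}), and conjugating a norm-continuous field by the strong-operator-continuous transition unitaries of Proposition~\ref{prop:hbundle} yields a jointly continuous map $\mfH\to\mfH$; since the formal adjoint family $\{(D^{u})^{*}\}$ is again a pseudodifferential family whose extension is the adjoint of $\tilde D$, the morphism axioms hold. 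Invariance follows from Definition~\ref{def:invpsdo}: $L_{g}D^{s(g)}L_{g^{-1}}=D^{r(g)}$ on smooth compactly supported sections, hence, $L_{g}$ being unitary, on the $L^{2}$-completions, so $L_{g}\tilde D^{s(g)}L_{g^{-1}}=\tilde D^{r(g)}$; thus $\tilde D\in\bgga$ (equivariant morphisms being automatically bounded). For Fredholmness, take a parametrix $P=\{P^{u}\}$ (Proposition~\ref{prop:symb}); $\tilde P=\{\tilde P^{u}\}$ is a morphism by the same argument as for $\tilde D$ (locally bounded, though not evidently bounded, as $G^{0}$ need not be compact and $P$ need not be invariant), and since $D^{u}P^{u}-I$ and $P^{u}D^{u}-I$ are of order $\le-1$ on the compact manifold $X^{u}$ and depend continuously on $u$, the fields $\tilde D\tilde P-I$ and $\tilde P\tilde D-I$ lie in $\kmfH$ (a norm-continuous field of compact operators is compact in the sense of $\kmfH$). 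Setting $S=\mathcal{A}\tilde P\in\bgga$ and repeating the computation of part (1) with the invariant morphism $\tilde D$ in place of $T$ gives $\tilde DS-I=\mathcal{A}(\tilde D\tilde P-I)$ and $S\tilde D-I=\mathcal{A}(\tilde P\tilde D-I)$, both in $\kgga$ by (d). Hence $\tilde D$ is an invariant Fredholm morphism on $\mfH$ (in fact it admits an equivariant parametrix).

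The step I expect to be the main obstacle is the verification of the four properties of $\mathcal{A}$ --- above all, that $\mathcal{A}$ maps $\kmfH$ into $\kmfH$. This is exactly where properness of the $G^{0}$-action is indispensable (it keeps the domain of integration a compact set of groupoid elements) and where Phillips' continuity machinery for sections of Banach $G$-bundles must be transcribed from the locally compact group setting to the groupoid setting; the remaining ingredients are changes of variables using the left-invariance of $\{\la^{u}\}$ and routine appeals to the continuity-in-parameters of pseudodifferential operators.
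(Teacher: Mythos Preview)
Your approach is essentially the paper's: average a parametrix over $G$ against a cutoff from Proposition~\ref{prop:propc} and check that the result is an equivariant inverse modulo equivariant compacts. Two small differences are worth noting. First, you average with $c\in C_{c}(G^{0})$ and weight $c(g^{-1}u)=c(s(g))$, whereas the paper's formula (\ref{eq:cgpar}) uses $c\in C_{c}(X)$ with the $x$-dependent weight $c(g^{-1}x)$; both work, and yours makes the identities $ST-I=\mathcal{A}(S_{0}T-I)$, $TS-I=\mathcal{A}(TS_{0}-I)$ more transparent. Second, in part~(ii) the paper's proof only argues that $\tilde D$ is an invariant morphism and defers Fredholmness to the averaging of (i), while you spell out the Fredholmness directly by averaging the parametrix $P$ of Proposition~\ref{prop:symb}; this is a genuine improvement in completeness.

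There is one imprecision you should fix. You assert that $\tilde D\tilde P-I$ and $\tilde P\tilde D-I$ lie in $\kmfH$, but by definition $\kmfH\subset\bmfH$, and since $G^{0}$ need not be compact and $P$ is not invariant, these fields need not be globally bounded (you yourself flag that $\tilde P$ is only locally bounded). What saves the argument is exactly what your proof of~(d) uses: only the \emph{local} compactness condition over the compact set $\operatorname{supp}c$, not membership in $\kmfH$. So either restate~(d) for morphisms $R$ satisfying ``$\{R^{v}\eta:v\in\operatorname{supp}c,\ \norm{\eta}\le 1\}$ is relatively compact'' (which a norm-continuous field of compact operators certainly does), or first replace $\tilde P$ by $\phi\tilde P$ for a bump $\phi\in C_{c}(G^{0})$ equal to $1$ on $\operatorname{supp}c$ --- this leaves $\mathcal{A}\tilde P$ unchanged and makes every field you average genuinely bounded. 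With that adjustment the proof is correct.
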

\begin{proof}
(i) The locally compact group version of this is given in \cite[Lemma
3.7]{Phillips}.
For each $u\in G^{0}$, define
$A^{u}\in B(\mfH^{u})$ by: for $\xi\in \mfH^{u}$, $x\in X^{u}$, set
\begin{equation}
A^{u}\xi(x)=\int_{G^{u}}
c(g^{-1}x)L_{g}S^{s(g)}L_{g^{-1}}f(x)\,d\la^{u}(g)  \label{eq:cgpar}
\end{equation}
where $c$ is as in Proposition~\ref{prop:propc}.
The argument of Phillips adapts directly to show that $A\in \bgga$ and
satisfies $TA-I,AT-I\in \kgga$.

(ii) By Proposition~\ref{prop:contpsdo}, in local terms, the map
$u\to \tilde{D}^{u}$, regarded as a map into $B(L^{2}(Z,E,\mu))$,
is strong operator -- even norm -- continuous,
and has an adjoint $u\to (\tilde{D}^{u})^{*}$.  So $D$, identified with
$\{\tilde{D}^{u}\}$, is a morphism.
The invariance of this morphism
follows from the invariance of each $D^{u}$ on the dense subspace
$C^{\infty}(X^{u},E^{(u)})$ of $L^{2}(X^{u},\tE^{(u)},\mu^{u})$.
\end{proof}

We now construct the Fredholm module which will give the index of the elliptic
pseudodifferential family $D$. The proof constructs a certain Kasparov $(\C,
C_{red}^{*}(G))$ module. The proof in the locally compact group case is
effectively given by Phillips (\cite[Ch. 6]{Phillips}) in his discussion of the
generalized Green-Rosenberg theorem. Actually, Phillips ({\em loc. cit.})
proves more than that, showing that his equivariant K-theory group
$K_{G}^{0}(X)$ is isomorphic as an abelian group to $K_{0}(C^{*}(G,X))$. We
will not consider the groupoid version of this in this paper but consider only
the corresponding Kasparov $(\C,C_{red}^{*}(G))$-module that gives the analytic
index.

Let $\Ga_{c}(\mfH)=C_{c}(G^{0},\mfH)$.  For $T\in \bgga$ and $f\in \gach$,
define a section $\Phi(T)(f)$ of $\mfH$ by:
\begin{equation}
\Phi(T)(f)(u)=T^{u}f\hspace{.1in}(=T^{u}(f(u))).           \label{eq:PhiT}
\end{equation}

\begin{proposition}    \label{prop:phiga}
Let $f\in \gach$.  Then the section $\Phi(T)(f)$ belongs to $\gach$.
\end{proposition}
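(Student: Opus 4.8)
The statement asserts that $\Phi(T)(f)$, defined by $u \mapsto T^u f(u)$, lies in $\Gamma_c(\mfH) = C_c(G^0,\mfH)$. Two things must be checked: that the section $u \mapsto T^u f(u)$ is continuous, and that it has compact support. The compact-support claim is immediate, since $\Phi(T)(f)(u) = T^u f(u)$ vanishes wherever $f(u) = 0$, so $\operatorname{supp}\Phi(T)(f) \subseteq \operatorname{supp} f$, which is compact. So the real content is continuity of the section of the Hilbert bundle $\mfH$.

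For continuity, I would argue locally. Fix $u_0 \in G^0$ and a trivializing neighborhood $U$ of $u_0$ so that $\mfH\mid_U \cong U \times H$ with $H = L^2(Z,E,\mu)$, using the cocycle construction from Proposition~\ref{prop:hbundle}. Under this identification, $f$ becomes a continuous map $U \to H$ and $T$ becomes a map $u \mapsto \hat{T}^u \in B(H)$ which, by the definition of a morphism on $\mfH$ together with the local boundedness of morphisms (\cite[Cor.~1.8]{Phillips}), is locally bounded in norm and strong-operator continuous (as is its adjoint $u \mapsto (\hat T^u)^*$). I then need to show $u \mapsto \hat T^u f(u)$ is norm-continuous into $H$. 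This is the standard ``strong continuity plus continuous vector plus local norm bound implies continuity of the application'' estimate: for $u$ near $u_0$, write
\[
\norm{\hat T^u f(u) - \hat T^{u_0} f(u_0)}
\le \norm{\hat T^u}\,\norm{f(u) - f(u_0)} + \norm{(\hat T^u - \hat T^{u_0})f(u_0)},
\]
and the first term goes to $0$ because $\norm{f(u)-f(u_0)} \to 0$ and $\norm{\hat T^u}$ is bounded on a neighborhood of $u_0$, while the second goes to $0$ by strong-operator continuity of $u \mapsto \hat T^u$ applied to the fixed vector $f(u_0)$.

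The one subtlety — and the step I expect to require the most care — is making sure that strong-operator continuity of the morphism $T$ genuinely transfers through the transition functions $g_{\al\bt}$ of $\mfH$ to the local representative $u \mapsto \hat T^u$. The transition functions $g_{\al\bt}$ are continuous into $U(H)$ with the strong (= weak) operator topology but typically not norm-continuous, so one must check that conjugating a strong-operator-continuous, locally norm-bounded family by such a cocycle again yields a strong-operator-continuous, locally norm-bounded family; this is again routine (product of SOT-continuous maps is SOT-continuous on norm-bounded sets, and the norm is preserved under unitary conjugation), but it is where the definitions from Proposition~\ref{prop:hbundle} and \cite[Chapter~1]{Phillips} must be invoked precisely. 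Once this is in place, the local argument above gives continuity of $\Phi(T)(f)$ near each point, hence globally, and combined with the support observation we conclude $\Phi(T)(f) \in \gach$.
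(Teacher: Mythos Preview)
Your proof is correct and follows essentially the same approach as the paper: both observe that the support is contained in $\operatorname{supp} f$, trivialize locally, and use continuity of $T$ together with continuity of $f$. The paper's version is slightly more direct, invoking only that $T:\mfH\to\mfH$ is continuous on the total space (the very definition of a morphism), so that $f(u_n)\to f(u)$ in $\mfH$ immediately yields $T^{u_n}f(u_n)\to T^u f(u)$ --- this bypasses your unpacking into strong-operator continuity of $u\mapsto \hat T^u$ and the transition-function discussion, though your more explicit argument is of course what underlies that one-line step.
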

\begin{proof}
Trivially, the support of $\Phi(T)$ is compact in $G^{0}$.  Let $u\in G^{0}$
and $u_{n}\to u$ in $G^{0}$.  Trivializing in a neighborhood of $u$ in
$G^{0}$, we can regard $f(u_{n}), f(u)\in L^{2}(Z,E,\mu)$
(cf. Proposition~\ref{prop:hbundle}), and by the continuity of $f$,
$\norm{f(u_{n}) - f(u)}_{2}\to 0$. Since $T$ is continuous on $\mfH$, we have
$T_{u_{n}}f\to T_{u}f$.  So $\Phi(T)(f)$ belongs to $\gach$.
\end{proof}

We will show that $\gach$ is a
pre-Hilbert module over the pre-$\cs$ $C_{c}(G)$, with $C_{c}(G)$-inner
product and module operations given
below, where $g\in G$, $x\in X$, $e,e_{1},e_{2}\in \gach$ and $f\in C_{c}(G)$:
\beqn
\lan e_{1},e_{2}\ran(g)&=&\int_{X^{r(g)}}
\ov{\lan e_{1}(x),L_{g}e_{2}(x)}\ran\,
d\mu^{r(g)}(x)    \label{eq:lanran}      \\
ef(x)&=&\int_{G^{p(x)}}L_{g}e(x)f(g^{-1})\,d\la^{p(x)}(g).   \label{eq:efx}
\eeqn
The preceding equality can alternatively be written as:
\begin{equation}
 (ef)(u)=\int_{G^{u}}(L_{g}e)f(g^{-1})\,d\la^{u}(g).  \label{eq:efx2}
\end{equation}

The following equality is useful:
\begin{equation}
\lan e_{1},L_{g}e_{2}\ran = \lan L_{g^{-1}}e_{1}, e_{2}\ran   \label{eq:ege}
\end{equation}
where we have omitted the superscripts $r(g), s(g)$ respectively on the two
preceding inner products.
This follows from  (\ref{eq:invarmu}) and the isometric action of $G$ on
$\tE$.

Note that (\ref{eq:lanran}) can then be rewritten succinctly as:
\begin{equation}
\lan e_{1},e_{2}\ran (g)=\ov{\lan e_{1},L_{g}e_{2}\ran}.      \label{eq:e1ge2}
\end{equation}

\begin{proposition}       \label{prop:preH}
The space $\gach$ is a 
pre-Hilbert module over
the pre-$C^{*}$-alg\-ebra $C_{c}(G)\subset C_{red}^{*}(G)$ with $C_{c}(G)$-inner product and module
action given by (\ref{eq:lanran}) and (\ref{eq:efx}) respectively.
\end{proposition}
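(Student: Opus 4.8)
The plan is to verify, in order: (a) that the right action (\ref{eq:efx}) is well defined and carries $\gach$ into $\gach$; (b) that the pairing (\ref{eq:lanran}) takes values in $C_{c}(G)$; (c) that (\ref{eq:efx}) makes $\gach$ a right $C_{c}(G)$-module and (\ref{eq:lanran}) a compatible $C_{c}(G)$-valued sesquilinear form with $\lan e_{2},e_{1}\ran=\lan e_{1},e_{2}\ran^{*}$; and (d) that $\lan e,e\ran\ge 0$ in $C_{red}^{*}(G)$, with $\lan e,e\ran=0$ only when $e=0$. Throughout, the model is the transformation group case in \cite[Ch.~6]{Phillips}, and the groupoid modifications are of the kind already met in Propositions~\ref{prop:etainvar}--\ref{prop:span}.

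For (a), fix $e\in\gach$, $f\in C_{c}(G)$, $u\in G^{0}$. In (\ref{eq:efx2}) the integrand $g\mapsto L_{g}e(s(g))f(g^{-1})$ is a continuous $\mfH^{u}$-valued function on $G^{u}$ (every fibre occurring is $\mfH^{r(g)}=\mfH^{u}$, continuity by Proposition~\ref{prop:hbundle}), supported in the compact set $G^{u}\cap(\operatorname{supp}f)^{-1}$, so $(ef)(u)$ is a convergent Bochner integral in $\mfH^{u}$. Continuity of $u\mapsto(ef)(u)$ as a section of $\mfH$ comes from the Banach-bundle version of Lemma~\ref{lemma:cont} (with $G^{0}$ and $\mfH$ in place of $X$ and $\mcE$), and $\operatorname{supp}(ef)\subseteq r((\operatorname{supp}f)^{-1}\cap s^{-1}(\operatorname{supp}e))$ is compact, so $ef\in\gach$. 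For (b), (\ref{eq:e1ge2}) gives $\lan e_{1},e_{2}\ran(g)=\ov{\lan e_{1}(r(g)),L_{g}e_{2}(s(g))\ran}_{\mfH^{r(g)}}$, which is bounded by $\norm{e_{1}(r(g))}\,\norm{e_{2}(s(g))}$ since each $L_{g}$ is unitary; continuity in $g$ follows from continuity of $r$, $s$, $e_{1}$, $e_{2}$, the $G$-action on $\mfH$, and the Hilbert-bundle inner product (local triviality and Cauchy--Schwarz). Finally $\lan e_{1},e_{2}\ran(g)\ne 0$ forces $(r(g),s(g))\in\operatorname{supp}e_{1}\times\operatorname{supp}e_{2}$, so $g$ ranges over $\al^{-1}(\operatorname{supp}e_{1}\times\operatorname{supp}e_{2})$, compact because $G^{0}$ is a proper $G$-space (here $\al\colon G\to G^{0}\x G^{0}$ is $g\mapsto(r(g),s(g))$). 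Thus $\lan e_{1},e_{2}\ran\in C_{c}(G)$.

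For (c), sesquilinearity is immediate from linearity of the integral. The identities $e(f_{1}*f_{2})=(ef_{1})f_{2}$, $\lan e_{1},e_{2}f\ran=\lan e_{1},e_{2}\ran f$, and $\lan e_{2},e_{1}\ran=\lan e_{1},e_{2}\ran^{*}$ are each a direct calculation: one expands using (\ref{eq:efx2}), (\ref{eq:convo}), (\ref{eq:inv}), (\ref{eq:e1ge2}), pushes the groupoid action through by $L_{gh}=L_{g}L_{h}$ and the relation (\ref{eq:ege}), and changes variables by the invariance (\ref{eq:invar}) of the Haar system --- exactly as in the corresponding steps of \cite[Ch.~6]{Phillips}.

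The heart of the matter is (d). Since $\norm{\cdot}_{red}=\sup_{u}\norm{\pi_{u}(\cdot)}$, an element of $C_{c}(G)$ is positive in $C_{red}^{*}(G)$ precisely when $\pi_{u}(\lan e,e\ran)\ge 0$ on $L^{2}(G,\la_{u})$ for every $u$. The plan is: after the change of variable $g\mapsto g^{-1}$ linking $\la_{u}$ and $\la^{u}$, for $\zeta\in C_{c}(G_{u})$ the number $\lan\pi_{u}(\lan e,e\ran)\zeta,\zeta\ran_{L^{2}(G,\la_{u})}$ rewrites as
\[
\int_{G^{u}}\int_{G^{u}}\zeta'(g)\,\ov{\zeta'(h)}\;\lan e,e\ran(h^{-1}g)\;d\la^{u}(g)\,d\la^{u}(h)
\]
for a suitable $\zeta'\in C_{c}(G^{u})$; then, expanding $\lan e,e\ran(h^{-1}g)$ via (\ref{eq:e1ge2}), writing $L_{h^{-1}g}=L_{h}^{-1}L_{g}$ and using that each $L_{g}$ is unitary, one identifies the last integral with
\[
\Bigl\|\,\int_{G^{u}}\ov{\zeta'(g)}\;L_{g}e(s(g))\;d\la^{u}(g)\,\Bigr\|_{\mfH^{u}}^{2}\;\ge\;0,
\]
so $\lan e,e\ran\ge 0$ (equivalently, $\lan e,e\ran$ is of positive type in the sense of Renault). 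For definiteness, $\norm{a}_{red}\ge\sup_{u\in G^{0}}|a(u)|$ for $a\in C_{c}(G)$, whence $\norm{\lan e,e\ran}_{red}\ge\sup_{u}\norm{e(u)}_{\mfH^{u}}^{2}$ and $\lan e,e\ran=0$ forces $e=0$. I expect this positivity step to be the one requiring real care: producing the displayed ``sum of squares'' form needs attentive bookkeeping of the interplay between $\{\la^{u}\}$, the $\la_{u}$, inversion, and the unitary $G$-action on $\mfH$; everything else reduces, given Lemma~\ref{lemma:cont} and Proposition~\ref{prop:hbundle}, to formal manipulations of integrals of the type already carried out in the preceding propositions.
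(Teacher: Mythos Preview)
Your proposal is correct and follows essentially the same route as the paper: well-definedness via Lemma~\ref{lemma:cont} and properness (you invoke properness of $G^{0}$ where the paper cites properness of $X$, but these are equivalent under the standing hypotheses), the algebraic identities (b) and (c) by direct computation with (\ref{eq:invar}), (\ref{eq:ege}), (\ref{eq:e1ge2}), and positivity by exhibiting $\lan\pi_{u}(\lan e,e\ran)\xi,\xi\ran$ as the squared norm in $\mfH^{u}$ of a vector-valued integral. The paper's computation of (d) is written out line by line in (\ref{eq:piufxi}) and matches your sketch.
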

\begin{proof}
We claim first that $\lan e_{1},e_{2}\ran\in C_{c}(G)$. That $\lan
e_{1},e_{2}\ran$ is continuous follows using (\ref{eq:e1ge2}), the continuity of
the sections $e_{1},e_{2}$ and the strong operator continuity of the groupoid
action $g\to L_{g}$ on $\mfH$.  Now let $C_{i}$ be the (compact) support of
the sections $e_{i}$. By the properness of the action of $G$ on $X$,
it follows that the set $\{(g,x): (g^{-1}x,x)\in C_{2}\x C_{1}\}$ is compact,
and the support of $\lan e_{1},e_{2}\ran$ is contained in the projection of
that set onto the first coordinate. A similar argument (using (\ref{eq:efx2}))
shows that $ef\in \gach$.

It remains to check (\cite[p.126]{Blackadar}) that (a) $\lan, \ran$ is
sesquilinear, and that for all $e,e_{1},e_{2}\in \gach$ and all
$f\in C_{c}(G)$, we have (b) $\lan e_{1},e_{2}f\ran=\lan e_{1},e_{2}\ran f$,
(c) $\lan e_{1},e_{2}\ran^{*}=\lan e_{2},e_{1}\ran$, and that (d) $\lan
e,e\ran\geq 0$, and is $0$ if and only if $e=0$. (a) is obvious.
We prove the others in turn.

For (b), using (\ref{eq:convo}):
\beqn
\lan e_{1},e_{2}f\ran(g)&=&
\int\ov{\lan e_{1}(x),L_{g}(e_{2}f)(x)\ran}\,d\mu^{r(g)}(x) \nonumber\\
&=& \int\ov{\lan e_{1}(x),g[e_{2}f(g^{-1}x)]\ran}\,d\mu^{r(g)}(x)\nonumber  \\
&=& \int\int \ov{\lan e_{1}(x),g[L_{k}(e_{2})(g^{-1}x)]\ran}f(k^{-1})\,
d\la^{s(g)}(k)d\mu^{r(g)}(x) \nonumber \\
&=& \int\int \ov{\lan e_{1}(x),L_{gk}e_{2})(x)\ran} f(k^{-1})\,
d\la^{s(g)}(k)d\mu^{r(g)}(x) \nonumber \\
&=&\int\int\ov{\lan e_{1}(x),L_{h}e_{2}(x)\ran} f(h^{-1}g)\,
d\la^{r(g)}(h)d\mu^{r(g)}(x)  \nonumber\\
&=& \int \lan e_{1},e_{2}\ran(h)f(h^{-1}g)\,d\la^{r(g)}(h) \\
&=& \lan e_{1},e_{2}\ran f(g).  \nonumber
\eeqn

For $(c)$, using (\ref{eq:ege}) and (\ref{eq:e1ge2}), we have
$\lan e_{2},e_{1}\ran^{*}(g)= \ov{\lan e_{2},L_{g^{-1}}e_{1}\ran}
=\ov{\lan L_{g}e_{2},e_{1}\ran}
=\lan e_{1},e_{2}\ran(g).$

For (d),
from the definition of $C_{red}^{*}(G)$ in \S 2, we just have to show that for
$u\in G^{0}$, we have
$\pi_{u}(\lan e,e\ran)\geq 0$ where $\pi_{u}$ was defined in
(\ref{eq:red}).
Taking $f=\lan e,e\ran$, we get, by \cite[(3.42)]{Paterson}, that
for $\xi\in C_{c}(G)$
\beqn
\lan\pi_{u}(f)\xi,\xi\ran &=&
\int\int f(gh)\xi(h^{-1})\ov{\xi(g)}\,d\la^{u}(h)\la_{u}(g) \nonumber \\
&=& \int\int
\ov{\lan e,L_{gh}e\ran}\xi(h^{-1})\ov{\xi(g)}\,d\la^{u}(h)d\la_{u}(g) \nonumber \\
&=& \int\int\ov{\lan L_{g^{-1}}e,L_{h^{-1}}e\ran}\xi(h)\ov{\xi(g)}\,
d\la_{u}(h)d\la_{u}(g) \nonumber \\
&=&\lan\int\xi(g)L_{g^{-1}}e\,d\la_{u}(g),
\int\xi(h)L_{h^{-1}}e\,d\la_{u}(h)\ran_{L^{2}(X^{u},\tE^{(u)},\mu^{u})}
\label{eq:piufxi} \\
&\geq & 0.
\eeqn

Since for $u\in G^{0}$, $\lan e,e\ran(u)=\int\norm{e(x)}^{2}\,d\la^{u}(g)$,
it follows that $\lan e,e\ran=0$ if and only if $e=0$.
\end{proof}

It is elementary that the $C_{c}(G)$-valued inner product $\lan ,\ran$
on $\gach$ extends to a $C_{red}^{*}(G)$-valued inner product, also denoted by
$\lan ,\ran$, on the completion $\gah$ of $\gach$ under the norm
$e\to \norm{\lan e,e\ran}^{1/2}$, and is a Hilbert $C_{red}^{*}(G)$-module.
The $\css$ of bounded and compact module maps on $\gah$ will be denoted by
$\bga$ and $\kga$ respectively.  In the following theorem, the map $\Phi$
actually determines an isomorphism from $\bgga$ onto $\bga$ but we
don't include the proof since it is not needed for our purposes.

\begin{theorem}  \label{th:morK}
The map $\Phi$ of Proposition~\ref{prop:phiga}
determines a $C^{*}$-isomorphism from $\bgga$
into $\bga$ that takes $\kgga$ onto $\kga$.
\end{theorem}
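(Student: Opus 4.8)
The plan is to verify that $\Phi$ is a well-defined injective $*$-homomorphism $\bgga\to\bga$, and then to identify $\Phi$ on the generators of the two ideals $\kgga$ and $\kga$. For the first point, let $T\in\bgga$. Proposition~\ref{prop:phiga} already gives $\Phi(T)(\gach)\subseteq\gach$. To see that $\Phi(T)$ is $C_{c}(G)$-linear I would expand $(ef)(u)$ via (\ref{eq:efx2}), pull $T^{u}$ through the integral, and use equivariance in the form $T^{u}L_{g}=T^{r(g)}L_{g}=L_{g}T^{s(g)}$ (valid for $g\in G^{u}$, since then $r(g)=u$) to obtain $\Phi(T)(ef)=\Phi(T)(e)f$. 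For adjointability, note first that $T^{*}\in\bgga$ (pass to adjoints in (\ref{eq:LgTrg}), using that the $L_{g}$ are unitary); then a short computation with (\ref{eq:e1ge2}) and the equivariance of $T^{*}$ yields $\lan\Phi(T)e_{1},e_{2}\ran=\lan e_{1},\Phi(T^{*})e_{2}\ran$, so $\Phi(T)$ is adjointable on the pre-Hilbert module $\gach$ with adjoint $\Phi(T^{*})$. Since $\norm{T}^{2}-T^{*}T\geq 0$ in the $C^{*}$-algebra $\bgga$, writing it as $B^{*}B$ with $B\in\bgga$ gives $\norm{T}^{2}\lan f,f\ran-\lan\Phi(T)f,\Phi(T)f\ran=\lan\Phi(B)f,\Phi(B)f\ran\geq 0$, so $\Phi(T)$ extends to an adjointable operator on $\gah$ with $\norm{\Phi(T)}\leq\norm{T}$, i.e. $\Phi(T)\in\bga$. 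Linearity and multiplicativity of $\Phi$ are immediate from (\ref{eq:PhiT}), and $\Phi(T)^{*}=\Phi(T^{*})$ was just shown, so $\Phi$ is a $*$-homomorphism.

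Next, $\Phi$ is injective: if $\Phi(T)=0$ then $T^{u}(f(u))=0$ for all $f\in\gach$ and all $u$, and by local triviality of the Hilbert bundle $\mfH$ (Proposition~\ref{prop:hbundle}) every vector of $\mfH^{u}$ is of the form $f(u)$ for some $f\in\gach$ (a bump function on $G^{0}$ times a constant section in a trivializing chart), so $T=0$. An injective $*$-homomorphism of $C^{*}$-algebras is isometric, so $\Phi$ is a $C^{*}$-isomorphism of $\bgga$ onto the closed $*$-subalgebra $\Phi(\bgga)$ of $\bga$.

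The main content is that $\Phi$ carries the ``rank-one'' equivariant compact morphisms onto the rank-one operators of the Hilbert module. Given $f_{1},f_{2}\in\gach$, define $R=\{R^{u}\}$ by
\[ R^{u}(\xi)=\int_{G^{u}}L_{g}(f_{1}(s(g)))\;\ov{\lan L_{g}(f_{2}(s(g))),\xi\ran_{\mfH^{u}}}\,d\la^{u}(g),\qquad\xi\in\mfH^{u}. \]
The integrand vanishes unless $s(g)\in\mathrm{supp}\,f_{1}\cup\mathrm{supp}\,f_{2}$, and $\{g\in G^{u}:s(g)\in K\}$ is compact for $K$ compact since $G^{0}$ is a proper $G$-space, so the integral is over a compact set; the Banach-bundle version of Lemma~\ref{lemma:cont} used in the proof of Proposition~\ref{prop:span} shows $R$ is a morphism on $\mfH$ and is compact (each $R^{u}$ is an integral of rank-one operators over a compact set, with the requisite uniformity over compacta of $G^{0}$), while the substitution $g\mapsto g_{0}^{-1}g$ together with (\ref{eq:invar}) shows $R$ is equivariant; thus $R\in\kgga$. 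A direct computation rewriting $(f_{1}\lan f_{2},f\ran)(u)$ by means of (\ref{eq:efx2}), (\ref{eq:e1ge2}) and (\ref{eq:ege}) then gives $\Phi(R)(f)=f_{1}\lan f_{2},f\ran$, that is, $\Phi(R)=\theta_{f_{1},f_{2}}$, the rank-one module operator $f\mapsto f_{1}\lan f_{2},f\ran$ on $\gah$.

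Both inclusions now follow. Since $\gach$ is dense in $\gah$, the operators $\theta_{f_{1},f_{2}}$ with $f_{i}\in\gach$ span a dense subspace of $\kga$; each lies in $\Phi(\kgga)$, which is closed because $\Phi$ is isometric, so $\kga\subseteq\Phi(\kgga)$. Conversely, the morphism $T(e_{1},e_{2})$ of (\ref{eq:te1e2'}) is exactly the $R$ above built from the sections $\hat e_{1},\hat e_{2}\in\gach$ obtained by restricting $e_{1},e_{2}\in C_{c}(X,\tE)$ to the fibers $X^{u}$, so $\Phi(T(e_{1},e_{2}))=\theta_{\hat e_{1},\hat e_{2}}\in\kga$; since such morphisms span $\kgga$ densely (Proposition~\ref{prop:span}) and $\Phi$ is continuous, $\Phi(\kgga)\subseteq\kga$. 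Hence $\Phi$ restricts to a $C^{*}$-isomorphism of $\kgga$ onto $\kga$. I expect the only real difficulty to be the technical verification that $R=\{R^{u}\}$ is genuinely an element of $\kgga$ for arbitrary $f_{1},f_{2}\in\gach$ --- the continuity of $R$ as a morphism on $\mfH$ and the uniform compactness over compact subsets of $G^{0}$ --- which requires the section/Banach-bundle form of Lemma~\ref{lemma:cont} together with careful bookkeeping of the identification $\mfH^{r(g)}\cong\mfH^{s(g)}$ effected by $L_{g}$ and of the fiberwise inner products; the algebraic identity $\Phi(R)=\theta_{f_{1},f_{2}}$ itself is routine once (\ref{eq:efx2}), (\ref{eq:e1ge2}) and (\ref{eq:ege}) are in hand.
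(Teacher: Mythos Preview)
Your proof is correct and follows the same overall architecture as the paper's, but differs in two places worth noting.

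For the norm bound $\norm{\Phi(T)}\le\norm{T}$, you use the $C^{*}$-algebraic trick of writing $\norm{T}^{2}-T^{*}T=B^{*}B$ in $\bgga$, which is slick and perfectly valid once multiplicativity and adjointability of $\Phi$ on $\gach$ are in hand. The paper instead proves the inequality (\ref{eq:Tcont}) directly by computing $\lan\pi_{u}(\lan\Phi(T)e,\Phi(T)e\ran)\xi,\xi\ran$ via formula (\ref{eq:piufxi}) and using equivariance of $T$; this is more hands-on but requires no preliminary algebra.

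For the identification of compacts, the paper never constructs your operator $R$ for general $f_{1},f_{2}\in\gach$. It works only with $e_{1},e_{2}\in C_{c}(X,\tE)$, for which $T(e_{1},e_{2})\in\kgga$ is already established in Proposition~\ref{prop:span}, and computes directly that $\Phi(T(e_{1},e_{2}))=\theta_{e_{1},e_{2}}$. Both inclusions then follow because the $T(e_{1},e_{2})$'s span $\kgga$ densely (Proposition~\ref{prop:span}) and the $\theta_{e_{1},e_{2}}$'s span $\kga$ densely (using that $C_{c}(X,\tE)$, viewed as sections $u\mapsto e^{u}$, is dense in $\gah$). This completely sidesteps the technical verification you flag --- that your $R$ lies in $\kgga$ for arbitrary $f_{i}\in\gach$ --- which is genuinely more delicate, since Lemma~\ref{lemma:cont} as stated applies to finite-rank vector bundles and the kernel argument in Proposition~\ref{prop:span} uses pointwise continuity of $e_{i}$ on $X$. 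Your route works, but the paper's is shorter precisely because it stays inside $C_{c}(X,\tE)$ where the bundle machinery is already set up.
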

\begin{proof}
Trivially, $\Phi(T):\gach\to \gach$ is linear. We will show first that is
continuous with norm $\leq \norm{T}$. In particular, it extends by continuity
to a bounded linear map on $\gah$.

Let $e\in \gach$ and $T\in \bgga$. It is sufficient to show that
\begin{equation}   \label{eq:Tcont}
\norm{\lan \Phi(T)e,\Phi(T)e\ran}\leq \norm{T}^{2}\norm{\lan e,e\ran}.
\end{equation}
This will follow if we can show that for
$u,\xi$ as in the proof of (d) of Proposition~\ref{prop:preH},
\begin{equation}   \label{eq:piulan}
\lan\pi_{u}(\lan \Phi(T)e,\Phi(T)e\ran)\xi,\xi\ran
\leq \norm{T}^{2}\lan\pi_{u}(\lan e,e\ran)\xi,\xi\ran.
\end{equation}

Using (\ref{eq:piufxi}), the invariance of $T$ and 
the fact that (in the proof) $s(g)=u$:
\beqns
\lan\pi_{u}(\lan \Phi(T)e,\Phi(T)e\ran)\xi,\xi\ran
&=& \norm{\int\xi(g)L_{g^{-1}}T^{r(g)}e\,d\la_{u}(g)}_{2}^{2}\\
&=& \norm{\int\xi(g)T^{u}L_{g^{-1}}e\,d\la_{u}(g)}_{2}^{2}\\
&\leq &\norm{T_{u}}^{2}\norm{\int\xi(g)L_{g^{-1}}e\,d\la_{u}(g)}^{2}
\eeqns
and (\ref{eq:piulan}) follows.

So $\Phi(T)$ is a bounded linear operator on $\gah$ and $\norm{\Phi(T)}\leq
\norm{T}$. To obtain $\Phi(T)\in \bga$, we show that
$\Phi(T)$ is an
adjointable module map on $\gah$ with adjoint $\Phi(T^{*})$.

For $e_{1},e_{2}\in \gah$ and $g\in G$, we have using (\ref{eq:e1ge2}) and
the invariance of $T^{*}$,
\beqns
\lan\Phi(T)e_{1},e_{2}\ran(g) &=& \ov{\lan\Phi(T)e_{1},L_{g}e_{2}\ran}  \\
&=& \ov{\lan T^{r(g)}e_{1},L_{g}e_{2}\ran}            \\
&=& \ov{\lan e_{1},(T^{r(g)})^{*}L_{g}e_{2}\ran}  \\
&=& \ov{\lan e_{1},L_{g}(T^{s(g)})^{*}e_{2}\ran}  \\
&=& \lan e_{1},\Phi(T^{*})e_{2}\ran.  \\
\eeqns
So $\Phi(T^{*})$ is an adjoint for $\Phi(T)$.

Next we have to show that $\Phi(T)$ is a module map.  To this end, for
$e\in \gach,f\in C_{c}(G)$, we have by (\ref{eq:efx2}),
\beqns
[\Phi(T)(ef)](u)&=& T^{u}(ef)(u)       \\
&=& T^{u}\int L_{g}(e)f(g^{-1})\,d\la^{u}(g)       \\
&=& \int T^{r(g)}L_{g}(e)f(g^{-1})\,d\la^{u}(g)       \\
&=& \int L_{g}T^{s(g)}(e)f(g^{-1})\,d\la^{u}(g)       \\
&=& (Te)f(u).
\eeqns
So $\Phi(T)$ is a module map.  From (\ref{eq:PhiT}), $\Phi$ is a
$^{*}$-homomorphism that is one-to-one.
So $\Phi$ is a $C^{*}$-isomorphism from
$\bgga$ into $\bga$.

For the last part of the theorem,
from Proposition~\ref{prop:span} and the density of $\gach$ in $\gah$, we just
have to show that $\Phi(T(e_{1},e_{2}))\in \kga$. In fact,
$\Phi(T(e_{1},e_{2}))$
is just the ``rank $1$'' operator $\theta_{e_{1},e_{2}}$
(\cite[p.128]{Blackadar}). Indeed, for $e\in \gach$ and $u\in G^{0}$,
using \ref{eq:te1e2'}:
\beqns
\Phi(T(e_{1},e_{2}))e(u)&=& T(e_{1},e_{2})^{u}e  \\
&=& \int L_{g}e_{1}\ov{\lan L_{g}e_{2},e\ran}\, d\la^{u}(g)   \\
&=& \int L_{g}e_{1}\ov{\lan e_{2},L_{g^{-1}}e\ran}\, d\la^{u}(g)   \\
&=& \int L_{g}e_{1}\lan e_{2},e\ran(g^{-1})\, d\la^{u}(g)   \\
&=& e_{1}\lan e_{2},e\ran(u)  \\
&=& \theta_{e_{1},e_{2}}(e)(u).
\eeqns
\end{proof}

We can now complete the construction of the analytic index of the
pseudodifferential operator $D$.  By Proposition~\ref{prop:parinv}, $D$
is an invariant Fredholm morphism on $\mfH$ and there exists an
$S\in \bgga$ such that both $SD-I,DS-I\in \kgga$.
Let $a=\Phi(D), b=\Phi(S)$. By Theorem~\ref{th:morK}, we have
$ab-I, ba-I\in \kga$.  We summarize the well-known construction of a Fredholm
module from $a$.  Let $\pi:\bga\to \bga/\kga$ be the quotient map.  Then
$\pi(a)$ is invertible.  Let $u$ be the unitary part of $\pi(a)$ given by the
polar decomposition of $\pi(a)$: so
$u=\pi(a)[\pi(a^{*}a)]^{-1/2}$. Let $c\in \bga$ be such that $\pi(c)=u$.  Then
$cc^{*}-I, c^{*}c-I\in \kga$.
We then get a Kasparov $(\C,C_{red}^{*}(G))$-bimodule
\[ (\gah\oplus \gah,
\normalsize
\begin{pmatrix}
0 & c^{*} \\
c & 0
\end{pmatrix}
\large
)
\]
which gives an element of $KK^{0}(\C, C_{red}^{*}(G))=
K_{0}(C_{red}^{*}(G))$.  This element is independent of the choice of $c$ by
the invariance of Kasparov classes under compact perturbations.
This element is the analytic index of $D$.



\end{document}